\newcommand{\cA}{\mathcal{A}}
\newcommand{\cD}{\mathcal{D}}
\newcommand{\cF}{\mathcal{F}}
\newcommand{\cX}{\mathcal{X}}
\newcommand{\cp}{\check{p}}
\newcommand{\sW}{\mathscr{W}}
\newcommand{\sS}{\mathscr{S}}
\newcommand{\sB}{\mathscr{B}}
\newcommand{\fK}{\mathfrak{K}}
\def\e{\varepsilon}
\newcommand{\A}{\mathbb{A}}
\newcommand{\R}{\mathbb{R}}
\def\be{\mathbf{e}}
\def\bp{\mathbf{p}}
\def\bq{\mathbf{q}}
\def\fC{\mathfrak{C}}
\newtheorem{thm}{Theorem}
\newtheorem{lem}{Lemma}
\newtheorem{cor}{Corollary}
\newtheorem{prop}{Proposition}
\newtheorem{dfn}{Definition}
\newtheorem{rem}{Remark}
\title[Learning on hexagonal structures and Monge--Ampère operators ]{Learning on hexagonal structures and Monge--Ampère operators}
\author{Noemie C. Combe}
\begin{document}
\maketitle

\begin{abstract}
Dually flat statistical manifolds provide a rich toolbox for investigations around the learning process. We prove that such manifolds are Monge-Ampère manifolds. Examples of such manifolds include the space of exponential probability distributions on finite sets and the Boltzmann manifolds. Our investigations of Boltzmann manifolds lead us to prove that Monge-Ampère operators control learning methods for Boltzmann machines. Using local trivial fibrations (webs) we demonstrate that on such manifolds the webs are parallelizable and can be constructed using a generalisation of Ceva's theorem. Assuming that our domain satisfies certain axioms of 2D topological quantum field theory we show that locally the learning can be defined on hexagonal structures. This brings a new geometric perspective for defining the optimal learning process.

\end{abstract}

\, 

{\bf Keywords:} Classical differential geometry, affine differential geometry, Monge-Ampère equation, Web theory, Frobenius manifold, differential geometric aspects of statistical manifolds and information geometry 

\, 

{\bf MSC2020}:{ 53Axx, 53A15, 35J96, 14J33, 53A60, 53B12}

\section{Introduction}

A rich differential geometric approach can be used in information geometry to study topics important to statistics \cite{Am,Boy1,Boy2,CM,CMM2022,CMM2023}, such as learning problems \cite{BCN,CN,AHTlearning,Hinton1,Hinton2}. This rich subject includes neural networks, decision theory~\cite{Ch}, manifolds of probability distributions (on discrete or continuous sample spaces) and learning methods such as the Ackley--Hinton--Sejnowski method \cite{AHTlearning}.

\,

Consider a Boltzmann machine, for instance. It is a network of stochastic $N$ neurons. In our setting, we define a neural network as a graph $G=(V,E)$ with a set of vertices $V$ being the neurons and the set of edges $E$, allowing the synaptic connections between the neurons. We assume that the number of vertices/edges is finite and take $|V|=N$.

\,

To each neuron one associates a finite set of values. In the simplest cases, neurons can take for instance values in the set $\mathbb{F}_2=\{1,0\}$, which corresponds to whether a given neuron is excited or not, respectively. The vector of values attached to the set of vertices ${\bf x}=(x_1,x_2,\cdots,x_N)$ is the {\it state} of the Boltzmann machine. 

\, 

Given two vertices $i$ and $j$ connected by an edge $e_{ij}$ there corresponds a real (positive) value $w_{ij}\in \R^+$ which is the weight of the edge. We can encode the collection of weights in an adjacency matrix $(w_{ij})_{1\leq i,j\leq N}$. This is a symmetric matrix, where entries are the weights $w_{ij}$. Given that self-loops are not allowed in the graph (i.e. there do not exist edges starting and ending on the same vertex) we have automatically that $w_{ii}=0$, implying that the symmetric weighted matrices are of null trace. 

\, 

The set of all such Boltzmann machines with fixed topology forms a geometric manifold, where (modifiable) synaptic weights of connections specify networks. Define the  space of weight parameters (this is a subspace of an Euclidean space $\mathbb{R}^{M}$ of symmetric matrices with trace equal to 0). 

\, 

let $\sB$ be the set of all probability distributions (given by Eq.\ref{E:logP}) provided by the Boltzmann machines. The connection matrix $W$ specifies each distribution uniquely. A Boltzmann manifold is therefore defined by $\sB$ and its coordinates are given by the $w_{ij}$.

\, 

By \cite{shoda}, any weight matrix $W$ of trace 0 can be expressed as a commutator i.e. $W=XY-YX=[X,Y]$, where $X,Y$ are matrices in $\mathbb{R}^{N(N+1)/2}$. Therefore, we can say that the space which parametrizes the manifold of probability distributions is a space of commutators. Note that the abelian case is, in some sense, pointless to consider since it corresponds to the case where there are no interactions occurring between the neurons.    

\, 

This commutator space parametrises a statistical manifold $M$ of probability distributions of exponential type, defined over the state space and consisting of $2^N$ states ${\bf x}$. Each  point on this manifold corresponds to a probability distribution. 

\, 

The procedure of learning corresponds ---on both manifolds---to a curve. We shall consider the learning process, locally. In particular, we show that the learning curve can be done in an optimal way, by using optimal transport provided by a Monge--Ampere operator. This is possible due to the fact that on the Boltzmann neural network manifold there exists a natural invariant Riemannian metric and a dual pair of affine connections. By ~\cite{CM,C24}  this structure is related to a Monge--Ampere manifold. Therefore, in Theorem~\ref{T:MA} we  show that:

\, 

\begin{center}
\begin{minipage}{11cm}
Theorem \ref{T:MA} and Corollary \ref{C:optimal} : {\it Everywhere locally on a dually flat manifold $M$ there exists an optimal learning procedure from one point to another, controlled by a pair of Monge--Ampere operators.}
\end{minipage}
\end{center}

\, 

Note that this statement is general (not only for discrete sample spaces). 

Having introduced the notion of ``very optimal learning'' in Definition \ref{D:veryoptimal}, we provide  the following statement:

\, 

\begin{center}
\begin{minipage}{11cm}
Theorem~\ref{P:Hex}: {\it
Let $\cD\subset N$ be a domain lying in a totally geodesic submanifold $N$ of a dually flat manifold $\sS$. Suppose that $\cD$ satisfies additionally Eq.~\ref{E:wdvv}. Then, $\cD$ is equipped with a hexagonal web. The learning on $\cD$ can thus be described using a local hexagonal lattice and it is very optimal.}
\end{minipage}
\end{center}
\, 

A flat totally geodesic satisfying the axioms of a Frobenius manifold has interesting geometric features. Frobenius manifolds are affine/Hessian manifolds such that the tangent sheaf carries the structure of a Frobenius algebra. A Frobenius algebra is a commutative, associative, unital algebra where the multiplication operation is invariant under the symmetric bilinear product.

Rewriting the associativity of the multiplication operation $\circ$ in the usual way as $( \partial_a\circ \partial_b)\circ \partial_c
= \partial_a\circ (\partial_b\circ \partial_c )$ we obtain a non--linear system of {\it Associativity Equations}, partial
differential equations for $\Phi$:
 \begin{equation}\label{E:wdvv}
\forall a,b,c,d :\quad \sum_{ef} \Phi_{abe}g^{ef}\Phi_{fcd} =  \sum_{ef} \Phi_{bce}g^{ef}\Phi_{fad},     
 \end{equation}

where $\Phi$ is a potential function and $\partial_a=\frac{\partial}{\partial_a x_a} $are flat vector fields.
In the community of physicists, they
are known as {\it WDVV} (Witten--Dijkgraaf--Verlinde--Verlinde) equations.

\, 

We prove  the existence of differential geometric webs of hexagonal type on these manifolds (see Thm. \ref{P:Hex}). Choosing the webs so that they form regular hexagons, this can enable us to provide our domain $\cD$ (everywhere locally) with a regular hexagonal/ honeycomb lattice. 

Hexagonal lattices simplify considerations around our {\it statistical data} greatly. In particular, if the hexagons are regular, one can use the (regular) lattice to consider the learning process locally  and benefit from dihedral symmetries arising from the hexagons and group web properties. 

\,

\thanks{
{\bf Acknowledgements} This research is part of the project No. 2022/47/P/ST1/01177 co-founded by the National Science Centre  and the European Union's Horizon 2020 research and innovation program, under the Marie Sklodowska Curie grant agreement No. 945339 \includegraphics[width=1cm, height=0.5cm]{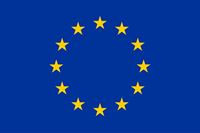}.}

\section{Networks and manifolds of probability distributions}
\subsection{Neural networks}
As stated in the introduction, we 
take as a starting point the example of some neural network coming from the Ising--Bolzman framework. Those networks are connected graphs without self-loops i.e. we omit the cases where edges are connected to only one vertex. 

\, 

The initial motivation for the study of manifolds of probability distributions is that one obtains a richer amount of information on the behavior of such neural networks if we study them not as individual objects but rather as a {\it family} of networks with fixed topology i.e. satisfying a given set of properties. This set of properties includes the number of vertices/edges of a graph as well as the set of decorations. By decorations we refer to threshold values (set of values attributed to each vertex) and weights of the edges.

\,

A neural network here is a decorated graph $G=(V,E,W,H)$, where $V$ refers to a set of $N$ vertices (the neurons) and $E$ refers to the set of edges. The symbol $W$ refers to the  $N\times N$-weight matrix, where each entry $w_{ij}$ gives us the weight of the edge connecting the vertex $i$ and the vertex $j$. Finally, the set $H$ corresponds to the set of values that each vertex can take. 

\, 

Given a family of neural networks with specified topology the collection of such neural networks forms a manifold. 

\, 

Each network corresponds to a point on that manifold. The intrinsic geometry becomes a tool for studying the learning process of the networks. This includes considering the intrinsic distance between two distinct points  as well as understanding the local curvature of the manifold, in a neighborhood of those points (that means networks), and how it impacts the learning process. 

\, 

In particular, we highlight that there exist two notions associated with this family of networks and which are tightly related. The first one is the space of weight matrices $\sW$: this forms a space of all symmetric matrices with null trace.  The second one is the corresponding statistical manifold $\sS$ which is defined over the space of states. This is given by a family of parametrised probability distributions $\sS=\{\rho(x;\theta)\}$ of exponential type. Any point on that manifold refers to an exponential probability distribution.  

\, 
\subsection{Statistical models}
Let $(\Omega,\mathcal{F},P)$ be a  measure space. A measure $P$ is said to be absolutely continuous w.r.t the measure $\mu$ if for every measurable set $C$, the equality $\mu(C)=0$ implies $P(C)=0.$  The measurable function is given by $P(C)=\int_C  \rho d\mu,$ $\forall C \subset \mathcal{F}$, where $\rho$ is called the density of the measure $P$ and $\rho= \frac{dP}{d\mu}$ is the Radon--Nikodym derivative.

\,

A statistical model (or $n$-dimensional manifold) can be defined as a parametrized family of probabilities, denoted $P_\theta$, being absolutely continuous with respect to a $\sigma$-finite measure $\mu$. Equivalently, one can consider it as the parametrized family of probability distributions $\sS=\{p({\bf x};\theta)\}$, where $p({\bf x};\theta) =\frac{dP_\theta}{d\mu}$ is the Radon--Nikodym derivative of $P_\theta$ w.r.t. $\mu$. 

\, 

This space comes equipped with:
\begin{itemize}
\item the {\it canonical parameter} given by $\theta= (\theta^1,\dots, \theta^n)\in \R^n$;
\item a family of random variables ${\bf x}=(x_i)_{i\in I}$  on a sample space $\Omega$, the letter $I$ refers to a set;  
\item $p({\bf x};\theta)$, the density of probability of the random variable ${\bf x}$ parametrized by $\theta$ w.r.t some measure $\mu$ on $\Omega$.   
\end{itemize}
  
A family $\sS=\{p({\bf x};\theta)\}$ of distributions is {\it exponential} if one can always write the density functions in the following way:   
  \[p({\bf x};\theta)= \exp(\sum_{i=1}^n\theta^ix_i-\Psi(\theta)),\] where   
  \begin{itemize}
  \item the symbol $\Psi(\theta)$ stands for a potential function. It is given in general by \begin{equation}\label{E:psi}
      \Psi(\theta)=\log\int_{\Omega}\exp\{\theta^ix_i\}d\mu\quad
  \end{equation}
  \item the vector parameter $\theta$ and co-vector of directional sufficient statistics  ${\bf x}=(x_i)_{i\in I}$ ($I$ is a finite set) have been chosen adequately;
\item the  canonical parameter satisfies $\Theta:=\{\theta\in \R^n: \Psi(\theta)<\infty\}$.
\end{itemize}
Note that in the discrete setting the integral is replaced by a sum in equation \ref{E:psi}.

\,

More precisely, the canonical parameter/coordinate system is obtained by expanding the $\log$ of the probability distribution:
\[\log p({\bf x})=\sum\theta^{i_1}_{1}x_{i_1}+\sum\theta^{i_1i_2}_{2}x_{i_1}x_{i_2}+\sum\theta^{i_1i_2i_3}_{3}x_{i_1}x_{i_2}x_{i_3}+\cdots \]\[+\sum\theta^{i_1i_2\cdots i_n}_{n}x_{i_1}x_{i_2}\cdots x_{i_n}
-\psi(\theta)\] where we assume $i_1<i_2<\cdots i_n$ and $(\theta^{i}_{1},\cdots, \theta^{i_1i_2\cdots i_n}_{n})$  form polyvectors. Therefore, we have $2^N-1$ parameters defining the coordinate system: $(\theta^{i}_{1}\cdots, \theta^{i_1i_2\cdots i_n}_{n})$ and corresponding to the probability distribution  $p({\bf x})$.

It is natural to include in the statistical model a structure of $n$-dimensional manifold, whenever $p({\bf x};\theta)$ is smooth enough in $\theta$.

\subsection{}
As a matter of clarity, let us return to the fundamental definitions of those objects. We refer to \cite{Lehman} for details. This definition applies to continuous and discrete sample spaces.  Let $\sS = \{ P_{\theta}, \theta\in \Theta\}$ be a family of probability distributions defined over a common sample space $( \Omega, \cF )$. 

\,

Assume ${\bf x}(\omega)$ is a statistic, $\omega\in \Omega$. We say that ${\bf x}$ is a sufficient  statistic for  $\sS$ if for  every $A \in \cF$  there exists a determination of the conditional probability function $P_{\theta}(A \vert t)$ being independent from $\theta$.

\begin{thm}[p.49\cite{Lehman}]

If $\cX$ is Euclidean, and if the statistic ${\bf x}$ is sufficient for $\sS$, then there exist determinations of the conditional probability distributions $P_{\theta}(A \vert t)$ which are independent from $\theta$ and such that for each fixed $t$, $P(A \vert  t)$ is a probability measure over $\cA$.
\end{thm}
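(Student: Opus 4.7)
The plan is to construct the regular conditional distribution $P(\cdot\mid t)$ by first exploiting sufficiency to get a consistent family of conditional probabilities indexed by sets, and then using the Euclidean structure of $\cX$ to upgrade this family to a bona fide probability measure for each fixed $t$.

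First I would extract from the sufficiency hypothesis, for each Borel set $A\in\cF$, a measurable function $t\mapsto f_A(t)$ with $f_A(t)=P_\theta(A\mid t)$ for every $\theta\in\Theta$, the exceptional null set possibly depending on $A$. The key difficulty is that these $f_A$ are defined only $\mu$-almost everywhere (where $\mu$ is any dominating measure, e.g.\ some $P_{\theta_0}$ or a countable convex combination), and a priori there is no reason why, for a fixed $t$, the assignment $A\mapsto f_A(t)$ should be countably additive or even take values in $[0,1]$.

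Next I would restrict attention to a countable algebra $\cA_0$ generating $\cF$: concretely, the algebra of finite disjoint unions of half-open rectangles with rational vertices in $\cX\subset\R^m$. On the countable collection of identities one must enforce (monotonicity, finite additivity on finitely many disjoint rectangles, $f_\cX=1$, $f_\emptyset=0$), each holds off a single $\mu$-null set; taking the countable union $N$ of these exceptional sets, for every $t\notin N$ the function $A\mapsto f_A(t)$ is a finitely additive $[0,1]$-valued set function on $\cA_0$.

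The central step is to promote finite additivity to countable additivity on $\cA_0$, and this is where the Euclidean hypothesis does its work. Using inner approximation by compact sets (finite unions of closed rational rectangles) inside $\cX$, the classical compactness argument shows that for $t\notin N$, the set function on $\cA_0$ is continuous from above at $\emptyset$, hence $\sigma$-additive. Carathéodory's extension theorem then yields a unique Borel probability measure $Q_t$ on $\cF$; on the exceptional null set $N$ I would assign an arbitrary fixed probability measure $Q_0$. Defining $P(A\mid t):=Q_t(A)$ gives, for every fixed $t$, a probability measure on $\cF$ that is independent of $\theta$ by construction. A monotone class argument, together with uniqueness of conditional probabilities $\mu$-a.e., verifies that $P(A\mid t)$ remains a version of $P_\theta(A\mid t)$ for every $A\in\cF$ and every $\theta$.

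I expect the main obstacle to be the passage from almost-sure finite additivity on a countable algebra to genuine $\sigma$-additivity for every $t$ outside a single null set; this is precisely the step that fails on arbitrary measurable spaces and forces the Euclidean (or, more generally, standard Borel) assumption on $\cX$. Once this compactness-based extension is established, everything else is formal bookkeeping of null sets.
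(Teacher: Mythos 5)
Your argument is correct and is essentially the proof the paper implicitly relies on: the paper gives no proof of its own but defers entirely to Lehmann (p.~49), and Lehmann's argument is exactly your route --- reduce to a countable generating algebra of rational rectangles, collect the countably many exceptional sets (each null under every $P_\theta$ by sufficiency) into a single null set, use compact inner approximation in the Euclidean space to upgrade finite to countable additivity, and extend by Carath\'eodory. Your identification of the compactness step as the place where the Euclidean (standard Borel) hypothesis is genuinely needed is the right diagnosis.
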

We invoke a second important statement. 
\begin{thm}[Factorization Theorem]
If the distributions $P_{\theta} \in \sS$  have probability distribution densities  $p_{\theta}= \frac{dP_{\theta}}{d\mu}$ with respect to a $\sigma$-finite measure $\mu$, then ${\bf x}$ is sufficient for $\sS$ if and only if there exist nonnegative Borel-measurable functions  $g_{\theta}$ on ${\bf x}$ and a nonnegative $\cF$-measurable function $h$ on $\cX$ such that

\[p_{\theta}(\omega)= g_{\theta} [({\bf x}(\omega)]h(\omega)
.\]

\end{thm}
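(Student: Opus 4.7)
The plan is to prove the two implications separately. The forward direction (sufficiency $\Rightarrow$ factorization) extracts the factorization from the existence of a $\theta$--independent conditional probability, and the backward direction (factorization $\Rightarrow$ sufficiency) reverses the procedure by computing the relevant conditional expectations directly from the factored density. Throughout, the $\sigma$--finiteness of $\mu$ and the dominated structure $p_\theta = dP_\theta/d\mu$ are what make Radon--Nikodym derivatives available.

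For the forward direction, assume $\mathbf{x}$ is sufficient. The natural candidate for $g_\theta$ is the density, relative to the pushforward of $\mu$, of the marginal law of $\mathbf{x}$ under $P_\theta$. Concretely, let $\nu_\theta = P_\theta \circ \mathbf{x}^{-1}$ and $\nu = \mu \circ \mathbf{x}^{-1}$ on the image space; by $\sigma$--finiteness, absolute continuity $\nu_\theta \ll \nu$ holds and I set $g_\theta := d\nu_\theta/d\nu$, viewed on $\Omega$ via composition with $\mathbf{x}$. The candidate for $h$ is the density of $\mu$, conditional on $\mathbf{x}$, reconstructed from the common conditional probability $P(A\mid t)$ whose existence (independent of $\theta$) is guaranteed by sufficiency together with the preceding theorem (p.49 of Lehmann). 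To verify the factorization, I compute $P_\theta(A)$ by disintegrating along $\mathbf{x}$: $P_\theta(A) = \int P(A\mid t)\, d\nu_\theta(t) = \int P(A\mid \mathbf{x}(\omega)) g_\theta(\mathbf{x}(\omega))\, d\mu(\omega)$. A Fubini/monotone-class argument, plus uniqueness of Radon--Nikodym derivatives, identifies the integrand with $g_\theta(\mathbf{x}(\omega))h(\omega)$ $\mu$--a.e., where $h$ is the regular conditional density.

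For the backward direction, assume the factorization $p_\theta(\omega) = g_\theta(\mathbf{x}(\omega))\, h(\omega)$. I need to exhibit a determination of $P_\theta(A\mid \mathbf{x}=t)$ free of $\theta$. Given $A \in \cF$, the conditional expectation $E_\theta[\mathbf{1}_A \mid \mathbf{x}]$ is characterised by the defining identity $\int_{\mathbf{x}^{-1}(B)} \mathbf{1}_A\, dP_\theta = \int_B E_\theta[\mathbf{1}_A\mid \mathbf{x}=t]\, d\nu_\theta(t)$ for every measurable $B$. Expanding the left-hand side using the factorization gives $\int_{\mathbf{x}^{-1}(B)} \mathbf{1}_A\, g_\theta(\mathbf{x})\, h\, d\mu$, and pushing forward to the image space via $\mathbf{x}$ yields an integrand of the form $g_\theta(t)\,\varphi(t)$, where $\varphi(t)$ is the $\mu$--conditional expectation of $\mathbf{1}_A h$ given $\mathbf{x}=t$ (well defined because $\mu$ is $\sigma$--finite). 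Dividing by $g_\theta(t)$ on $\{g_\theta > 0\}$ (outside this set the measure $\nu_\theta$ is zero, so values there are irrelevant) gives a version of $P_\theta(A\mid \mathbf{x}=t)$ that is manifestly independent of $\theta$, proving sufficiency.

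The main obstacle lies in the forward direction, and it is essentially a measure-theoretic bookkeeping issue rather than a conceptual one: I have to combine the existence of a \emph{regular} conditional probability (provided by the preceding theorem under the Euclidean hypothesis) with the Radon--Nikodym derivative $g_\theta$, making sure that the chosen versions are jointly measurable and the identification $p_\theta = g_\theta(\mathbf{x})h$ holds $\mu$--almost everywhere rather than only $P_\theta$--almost everywhere. Handling the null set $\{g_\theta = 0\}$ uniformly in $\theta$ and exhibiting a single $h$ that works simultaneously for all parameters is where one must be careful; a clean way is to fix a reference dominating measure (or, in the $\sigma$--finite family case, invoke Halmos--Savage to reduce to a countable convex combination $\sum c_n P_{\theta_n}$) and carry out the disintegration once and for all with respect to it.
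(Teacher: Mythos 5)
The paper does not actually prove this theorem: its ``proof'' is the citation to Lehmann, pp.~49--50, and what you have written is essentially a reconstruction of the Neyman--Fisher--Halmos--Savage argument found there. Your overall architecture (forward direction: extract $g_\theta$ as a marginal density of $\mathbf{x}$ and $h$ as a conditional density; backward direction: compute the conditional expectation from the factored density and cancel $g_\theta$ on $\{g_\theta>0\}$) is the right one and matches the cited source, so there is no divergence of method to report.

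There is, however, one genuine soft spot, and it is not mere bookkeeping. In the forward direction you take the dominating measure on the image space to be $\nu=\mu\circ\mathbf{x}^{-1}$ and set $g_\theta=d\nu_\theta/d\nu$. The pushforward of a $\sigma$-finite measure under a measurable map need not be $\sigma$-finite, so this Radon--Nikodym derivative (and likewise the ``$\mu$-conditional expectation of $\mathbf{1}_A h$'' invoked in the backward direction) is not automatically defined; and even granting its existence, the identity $P_\theta(A)=\int P(A\mid\mathbf{x})\,g_\theta(\mathbf{x})\,d\mu$ cannot be converted into $\int_A g_\theta(\mathbf{x})\,h\,d\mu$ without knowing that the common kernel $P(\cdot\mid t)$ is simultaneously a version of the conditional probability under a single measure that dominates the whole family and is itself absolutely continuous with respect to $\mu$. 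That is precisely what the Halmos--Savage measure $\lambda=\sum_n c_n P_{\theta_n}$ supplies: one sets $g_\theta=dP_\theta/d\lambda$ (shown to be a function of $\mathbf{x}$ using sufficiency, via $E_\lambda[\mathbf{1}_A\mid\mathbf{x}]=P(A\mid\mathbf{x})$) and $h=d\lambda/d\mu$, after which $p_\theta=g_\theta(\mathbf{x})\,h$ is immediate, and the converse direction runs through $dP_\theta/d\lambda$ being $\mathbf{x}$-measurable. You mention this reduction only in your closing sentence as ``a clean way''; in fact it is the load-bearing step of both implications, and the argument as primarily written, with $\mu\circ\mathbf{x}^{-1}$ in place of $\lambda\circ\mathbf{x}^{-1}$, does not close without it.
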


\begin{proof}
In  \cite{Lehman} p. 49--50. 
\end{proof}

\subsubsection{The exponential family}
The exponential family can be defined in the following way:
\begin{dfn}

A set of probability distributions $ P[\cdot]$ is called an exponential family of finite dimension with canonical affine parameter if it is described by a family of densities):
\begin{equation}\label{E:DM45}
\frac{d P_\theta}{d \mu} (\omega)=p ( \omega; \theta) = p_0 (\omega) \exp \{ \theta^j x_j (\omega) - \Psi (\theta) \} ,
\end{equation}
where vectors $\theta=(\theta^1,...,\theta^n)$  are the parameters. The set $\Theta$ of parameter points $\theta=(\theta^1,\cdots,\theta^n)$ is the natural parameter space of the exponential family, 
co-vectors ${\bf x}= (x_1(\omega),...,x_n(\omega))$ are directional statistics for a given family of probability distribution, $\mu[\cdot]$ is a fixed dominating measure, over a sample space $(\Omega, \cF)$ and $\Psi (\theta) $ is normalising divisor, such that
\begin{equation}\label{E:DM45}
\exp\{\Psi (\theta)\}=\int_{\Omega} \exp \{\theta^j x_j (\omega)\}p_0 (\omega) \mu[d\omega].
\end{equation} 

It is assumed that the distribution parameter $\theta$ runs through all values for which the normalising divisor is finite. The directional statistic ${\bf x} = (x_1(\omega),...,x_n(\omega))$ is a co-vector of an exponential family.
\end{dfn}

Note the following fact:
\begin{lem}
The natural parameter space of an exponential family is convex.
\end{lem}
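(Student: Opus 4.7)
The plan is to show convexity of $\Theta$ by verifying that $\Psi$ itself is a convex function on the set where it is finite; this immediately forces $\Theta = \{\theta : \Psi(\theta) < \infty\}$ to be a convex subset of $\R^n$.

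First I would fix two points $\theta_1, \theta_2 \in \Theta$ and a scalar $\lambda \in [0,1]$, and consider the convex combination $\theta = \lambda \theta_1 + (1-\lambda)\theta_2$. Writing out the normalising integral, one has
\begin{equation*}
\exp\{\Psi(\theta)\} = \int_{\Omega} \exp\{\lambda \theta_1^j x_j(\omega) + (1-\lambda)\theta_2^j x_j(\omega)\}\, p_0(\omega)\, \mu[d\omega].
\end{equation*}
The next step is to split the exponential factor as a product and write the integrand as
\begin{equation*}
\bigl[\exp\{\theta_1^j x_j(\omega)\}\, p_0(\omega)\bigr]^{\lambda} \cdot \bigl[\exp\{\theta_2^j x_j(\omega)\}\, p_0(\omega)\bigr]^{1-\lambda},
\end{equation*}
using $p_0 = p_0^{\lambda} p_0^{1-\lambda}$, which is legitimate because $p_0 \geq 0$.

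The main technical input is H\"older's inequality with conjugate exponents $p = 1/\lambda$ and $q = 1/(1-\lambda)$ (handling the boundary cases $\lambda\in\{0,1\}$ trivially). Applying it gives
\begin{equation*}
\exp\{\Psi(\theta)\} \leq \left(\int_{\Omega} \exp\{\theta_1^j x_j\} p_0\, d\mu\right)^{\lambda} \left(\int_{\Omega} \exp\{\theta_2^j x_j\} p_0\, d\mu\right)^{1-\lambda} = \exp\{\lambda \Psi(\theta_1) + (1-\lambda)\Psi(\theta_2)\}.
\end{equation*}
Taking logarithms yields $\Psi(\theta) \leq \lambda \Psi(\theta_1) + (1-\lambda)\Psi(\theta_2) < \infty$, so $\theta \in \Theta$ and $\Psi$ is convex on $\Theta$.

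There is no real obstacle here; the only thing to be careful about is the measure-theoretic setup, namely verifying that the measurable functions involved are nonnegative so that H\"older applies without sign issues, and noting that strict convexity would require additional hypotheses (e.g.\ that the $x_j$ are affinely independent $\mu$-a.e.), which are not needed for the present statement. As a bonus, the same calculation shows that $\Psi$ is a convex function, a fact which is used implicitly in the Hessian/dual-flat structure invoked earlier in the paper.
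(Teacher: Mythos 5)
Your proof is correct and follows essentially the same route as the paper: both establish convexity of $\Theta$ via H\"older's inequality applied to the normalising integral of the convex combination $\lambda\theta_1+(1-\lambda)\theta_2$. If anything, your version is the more complete one, since you carry the $p_0$ factor through the estimate, handle the boundary cases $\lambda\in\{0,1\}$, and explicitly draw the conclusion $\Psi(\theta)\leq\lambda\Psi(\theta_1)+(1-\lambda)\Psi(\theta_2)<\infty$, whereas the paper stops at the inequality itself.
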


\begin{proof}
Let $\theta=(\theta^1,....,\theta^k)$ and $\bar\theta=(\bar\theta^1,....,\bar\theta^k)$ be two vectors from
 parameter space $\Theta$. Let us assume that the normalising divisors $ \Psi (\theta)$ and $\Psi (\bar\theta)$ are finite. So, by H\"{o}lder's inequality: 

\[ \begin{aligned}
\int \exp \left\{ \sum_i[ \alpha \theta^i + (1- \alpha) \bar\theta^i]x_i (\omega) \right\} d\mu(\omega )\\
\leq \left[ \int \exp
\left\{  \sum_i \theta^i x_i (\omega) \right\}\right]^{\alpha}&  \left[ \int \exp \left \{\sum_i \bar\theta^i x_i (\omega)\right\} d\mu(\omega) \right]^{1-\alpha}.
\end{aligned}\]
Assume that the convex set $\Theta$ lies in a $k$-dimensional linear space. So, 

$$p_{\theta}(\omega) = C(\theta) \exp [ \sum_{j=1}^k \theta^j x_j(\omega)],$$

where $C(\theta)$ is a constant, can be rewritten  in a form involving fewer than $k$ components of ${\bf x}$. Hence, we can assume that $\Theta$ is $k$-dimensional. 

From the factorisation theorem it follows that ${\bf x}(\omega) = (x_1(\omega),\cdots,
x_k(\omega))$ is sufficient for $\sS = \{ P_{\theta}, \theta \in \Theta\}$.

\end{proof}

Let us consider the following Lemma.

\begin{lem}
Let $\Theta\in \mathbb{C}^n$ be a set of values of the $n$-dimensional (complex) vector $\theta=(\theta^1,...,\theta^n)$ and such that the 
following integral 
\[
 \Phi(\theta) = \int_{\Omega} \phi (\omega) \exp \{\theta^j x_i (\omega) \} P\{d\omega\}
\]
is absolutely summable to a finite value. Then, $\Theta$ is a convex subset of an $n$-dimensional space.
\end{lem}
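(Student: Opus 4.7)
The plan is to adapt almost verbatim the Hölder-inequality argument from the preceding lemma to this slightly more general setting, in which an auxiliary weight $\phi(\omega)$ is present and the parameter is allowed to be complex. Before starting, I would first clarify what convexity means here: since $\Theta\subset\mathbb{C}^n$, I read the statement as real convexity, i.e.\ stability under $\alpha\theta+(1-\alpha)\bar\theta$ with $\alpha\in[0,1]$, equivalently convexity of $\Theta$ viewed as a subset of $\mathbb{R}^{2n}$. Fixing $\theta,\bar\theta\in\Theta$ and $\alpha\in(0,1)$, my goal reduces to showing that $\alpha\theta+(1-\alpha)\bar\theta\in\Theta$.

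The single observation that unlocks the complex case is $|\exp\{z\}|=\exp\{\mathrm{Re}(z)\}$, so absolute summability of $\Phi$ at a point $\eta\in\mathbb{C}^n$ depends only on the real parts $\mathrm{Re}(\eta^j)$. Concretely, at $\eta=\alpha\theta+(1-\alpha)\bar\theta$ the modulus of the integrand factorises as
\[
|\phi(\omega)|\,\exp\{\alpha\,\mathrm{Re}(\theta^j)x_j(\omega)\}\,\exp\{(1-\alpha)\,\mathrm{Re}(\bar\theta^j)x_j(\omega)\}.
\]
I would then split $|\phi|=|\phi|^{\alpha}\cdot|\phi|^{1-\alpha}$ and apply Hölder's inequality with conjugate exponents $1/\alpha$ and $1/(1-\alpha)$. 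Mirroring the previous lemma, this gives
\[
\int_\Omega|\phi|\exp\{\mathrm{Re}(\alpha\theta^j+(1-\alpha)\bar\theta^j)x_j\}\,dP\leq\left(\int|\phi|e^{\mathrm{Re}(\theta^j)x_j}dP\right)^{\alpha}\left(\int|\phi|e^{\mathrm{Re}(\bar\theta^j)x_j}dP\right)^{1-\alpha}.
\]
Both factors on the right are finite by hypothesis on $\theta,\bar\theta$, so the left-hand side is finite, which is precisely the absolute summability condition at $\alpha\theta+(1-\alpha)\bar\theta$.

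I do not expect a real obstacle: the new ingredient relative to the earlier lemma, namely the factor $\phi(\omega)$, is absorbed cleanly by the multiplicative splitting $|\phi|=|\phi|^{\alpha}|\phi|^{1-\alpha}$ inside the Hölder estimate, while the complex parameter is handled by passing through $\mathrm{Re}(\theta^j)$. The boundary cases $\alpha\in\{0,1\}$ are immediate. If anything, the only delicate point is stating the claim correctly — one must interpret convex as real convexity in $\mathbb{R}^{2n}$, since arbitrary $\mathbb{C}$-affine combinations need not preserve the finiteness condition on $\Phi$.
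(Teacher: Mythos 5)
Your proof is correct: the paper in fact states this lemma without giving any proof, and your argument is precisely the natural extension of the H\"older-inequality proof the paper supplies for the preceding lemma on the convexity of the natural parameter space of an exponential family. The two new ingredients --- the weight $\phi(\omega)$ and the complex parameter --- are handled correctly by the splitting $|\phi|=|\phi|^{\alpha}|\phi|^{1-\alpha}$ and the identity $|\exp\{z\}|=\exp\{\mathrm{Re}(z)\}$, and your reading of convexity as real convexity of $\Theta$ viewed in $\mathbb{R}^{2n}$ is the standard and only sensible interpretation here.
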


The integral $\Phi(\theta)$ is an analytic function of the complex parameters $(\theta^1,...,\theta^n)$, whenever their real parts vary in an open convex set of interior points of $\Theta$ i.e in $$\{ \theta: Re(\theta) \in Int( \Theta)\}.$$ The integral $\Phi$  is a continuous function of the parameter on any $m$-dimensional simplex, $m<n$, lying entirely in $\Theta$.

\,

The derivatives of $\Phi(\theta)$ with respect to the components $(\theta^1,...,\theta^n)$ of the vector $\theta$ in the open domain $Int (\Theta)$ may be evaluated by successive differentiation under the integral sign:

\begin{equation}
\frac{d^k}{ds^k} [ \phi(\omega)\exp\{\theta^j x_j(\omega)\}] = \phi (\omega) \left[ \prod_{j=1}^n
[x_j (\omega)]^{k_j}\right] \exp \{\theta^j x_j (\omega)\}
\end{equation}
where $k=(k^1,...,k^n)$.

\, 

Suppose the cube $C= \{  \theta: \vert \theta^j - \theta^j_0 \vert \leq r, \quad  j= 1,...,n \}$ 
is a subset of $\Theta$. Then in any concentric subcube $C_k = \{  \theta: \vert \theta^j - \theta^j_0 \vert \leq (1- \kappa)r, \quad  j=1,...,n,  \}$ 
$0< \kappa <1$, the derivative is majored by

\begin{equation} \label{E:maj}
\left[  \prod_{j=1}^ n k_j ! (\kappa r )^{-k_j} \right ] \sum_{\beta=1}^{2^n} \vert \phi(\omega) \vert \exp \{  \theta^{\beta }\cdot x(\omega)\},
\end{equation}
where $\theta^{\beta} \in \Theta$, $ \beta = 1,..., 2^n$, are the vertices of the cube $C$.

\subsection{Dually flat Riemannian manifolds}\label{S:Duallyflat}
We focus on the case where the sample space is discrete. Let $\sS=\{p({\bf x})\}$ be a set of probability distributions over the state space $\Omega$ consisting of $2^N$ Boltzmann states. 

\, 

The manifold $\sS$ is a $2^N-1$-dimensional manifold. This is easy to see, since $\sum\limits_{\bf x}p({\bf x})=1$ and it implies that there are $2^N-1$ degrees of freedom. For technical reasons, we assume that $0<p({\bf x})<1$, which implies that we are considering an open simplex of $2^N-1$ dimensions.

\, 

This manifold is equipped with a Riemannian metric (Fisher metric in statistics;  Hessian metric in analysis) and has a dual pair of affine (dual) connections \cite{Am,Ch}. It has been proved that the set of all probability distributions over a finite set forms a {\it dually flat manifold} \cite{Am}. We elaborate on this in the following paragraphs. However, let us first explain what such a structure implies algebraically. 

\, 

By Amari--Nagaoka, there exists a certain duality theory of the $\pm\alpha$-connections which comes into light as we consider the set of all probability distributions over a finite set. Works in \cite{CM,CCN-Algebra} suggest that this duality theory follows in fact from the existence of a hidden paracomplex structure on the manifold. Following this viewpoint, one considers the manifold endowed with a dual pair of affine connections rather as a certain manifold defined over an {\it algebra of paracomplex numbers}. 

\, 

 The algebra of paracomplex numbers is defined as the real vector space $\fC = \R\oplus \R$ with the multiplication
\[
(x,y) \cdot (x',y') = (xx' + yy', xy' + yx'). 
\]
Put $\varepsilon : =(0,1)$.  Then $\varepsilon^2 =1$, and moreover
\[
\fC =\R+\e\R= \{z=x+\e y \, |\, x,y \in \R \}.
\]

Given a paracomplex number $z_{+} = x+\varepsilon y$, its conjugate is defined by $z_{-}:= x-\e y$.

Paracomplex numbers also go by the name of split complex numbers and hyperbolic numbers. The split complex numbers terminology is rather used on a  number theory side, as it comes from a {\it split algebra} while the hyperbolic number terminology comes from the fact that it has been extensively used in  hyperbolic geometry. We refer for instance to~\cite{Albert}, where this notion has been deeply investigated. One can mention also the article~\cite{Atiyah} for a geometric use of splitcomplex/paracomplex numbers.  

\,


Paracomplex numbers form a two dimensional algebra which coincides with the Clifford algebra of type $\mathcal{C}_{1,0}$. Under an adequate change of variables, one can also express the generators by a pair of two idempotent elements.

\, 

Let $E_{2m}$ be a $2m$-dimensional real affine space. {\it A paracomplex structure} on $E_{2m}$ is an endomorphism $\fK: E_{2m} \to E_{2m}$ such that $\fK^2=I$,
and the eigenspaces $E_{2m}^+, E_{2m}^-$ of $\fK$ with eigenvalues $1,-1$ respectively, have the same dimension. 
The pair $(E_{2m},\fK)$ will be called a {\it paracomplex affine space.}

\, 

Finally, {\it a paracomplex manifold} is a real manifold $M$ endowed with a paracomplex structure $\fK$ that admits an atlas of paraholomorphic coordinates (which are functions with values in the algebra $\fC = \R + \e\R$ defined above), such that the transition functions are paraholomorphic.
\, 

Explicitly, this means the existence of local coordinates $(z_+^\alpha, z_-^\alpha),\, \alpha = 1\dots, m$ such that
paracomplex decomposition of the local tangent fields is of the form
\[
T^{+}M=span \left\{ \frac{\partial}{\partial z_{+}^{\alpha}},\, \alpha =1,...,m\right\} ,
\]
\[
T^{-}M=span \left\{\frac{\partial}{\partial z_{-}^{\alpha}}\, ,\, \alpha =1,...,m\right\} .
\]
Such coordinates are called {\it adapted coordinates} for the paracomplex structure $\fK$.
\vspace{3pt}

If $E_{2m}$ is already endowed with a paracomplex structure $\fK$ as above,
we define {\it the paracomplexification of $E_{2m}$} as $E_{2m}^\fC = E_{2m} \otimes_{\R} \fC$ and we extend $\fK$ to a $\fC$-linear endomorphism $\fK$ of $E_{2m}^\fC$. Then, by setting
\[
E_{2m}^{1,0} = \{v\in V^\fC \, |\, \fK v=\e v\}=\{v+\e\fK v\, |\, v \in E_{2m}\},
\]
\[
E_{2m}^{0,1} = \{v\in V^\fC \, |\, \fK v= -\e v\}=\{v-\e\fK v\, |\, v\in E_{2m}\},
\]
we obtain $E_{2m}^\fC =E_{2m}^{1,0}\oplus E_{2m}^{0,1}$.

We associate with any adapted coordinate system $(z_{+}^{\alpha}, z_{-}^{\alpha})$ a paraholomorphic coordinate system $z^{\alpha}$ by 
\[
z^\alpha\, =\, \frac{z_{+}^{\alpha}+z_{-}^{\alpha}}{2} +\e\frac{z_{+}^{\alpha}-z_{-}^{\alpha}}{2}, \alpha=1,...,m .
\]

\,
 
We define the paracomplex tangent bundle as the $\R$-tensor product $T^\fC M = TM \otimes \fC$ and we extend the endomorphism $\fK$ to a $\fC$-linear endomorphism of $T^\fC M$. For any $\cp \in M$, we have the following decomposition of $T_{\cp}^\fC M$:
\[
T_{\cp}^\fC M=T_{\cp}^{1,0}M \oplus T_{\cp}^{0,1}M\,
\]
where 
\[
T_{\cp}^{1,0}M = \{v\in T_{\cp}^\fC M | \fK v=\e v\}=\{v+\e \fK v| v \in E_{2m}\} ,
\] 
\[
T_{\cp}^{0,1}M = \{v\in T_{\cp}^\fC M | \fK v= -\e v\}=\{v-\e \fK v|v\in E_{2m}\}
\]
are the eigenspaces of $\mathfrak{K}$ with eigenvalues $\pm \e$.
The following paracomplex vectors 
\[
\frac{\partial}{\partial z_{+}^{\alpha}}=\frac{1}{2}\left(\frac{\partial}{\partial x^{\alpha}} + \e\frac{\partial}{\partial y^{\alpha}}\right),\quad \frac{\partial}{\partial{z}_{-}^{\alpha}}=\frac{1}{2}\left(\frac{\partial}{\partial x^{\alpha}} - \e\frac{\partial}{\partial y^{\alpha}}\right)
\]
form a basis of the spaces $T_{\cp}^{1,0}M$ and $T_{\cp}^{0,1}M$.

\, 
 
 In particular, this allows us to define a paracomplex Dolbeault complex.
See \cite{CM} for more details and the bibliography therein. 
\, 

This approach is in fact quite natural, given that a paracomplex manifold is automatically endowed with a pair of real affine connections satisfying a duality relation. In particular, the $\R$-realization of such a manifold enables us to have a real manifold equipped with an involution. Going back to the manifold of probability distributions, the duality relation between the pair of connections results therefore from the existence of this involution on the manifold. 

\,

A manifold is said to be $\nabla$-flat (i.e. flat with respect to the covariant derivative) it is also $\nabla^*$-flat, where $\nabla^*$ is dual to $\nabla$. There exist two flat coordinates for dually flat manifolds. In such dually flat manifolds, there exist two specific coordinate systems: a $\nabla$-affine coordinate system $\eta$ and $\nabla^*$-affine coordinate system $\eta^*$.  The coordinate systems $\eta$ and $\eta^*$ are mutually dual if their natural bases are biorthogonal.

\, 

Note that in a Riemannian manifold, dual coordinate systems may not exist. Nevertheless, they always exist on an $\alpha$-flat manifold. One consequence of this structure follows from \cite[Thm 3.4]{Am}.

\, 

Given a manifold, it is endowed with a pair of dual coordinates systems if and only if there exists a pair of corresponding potential functions such that the metric tensor is derived by differentiating it twice i.e. \[g_{ij}=\partial_i\partial_j\psi(\eta),\quad g^{ij}=\partial^i\partial^j\psi^*(\eta^*),\]
where $\partial_i=\frac{\partial}{\partial \eta^i}$ and $\partial^i=\frac{\partial}{\partial \eta^*_i}$.

\,

The passage from one dual coordinate to the other is done using Legendre transforms i.e. $$\eta^i =\partial^i\psi^*(\eta^*)\quad \text{and}\quad \eta^*_i =\partial_i\psi(\eta).$$

Whenever, the manifold is flat with respect to a pair of torsion-free dual affine connections $\nabla$ and $\nabla^*$, there exists a pair of dual coordinate systems $(\eta,\eta^*)$ such that $\eta$ is a $\nabla$-affine and $\eta^*$ is a $\nabla^*$-affine coordinate system see \cite[Theorem 3.5]{Am}.

\subsection{The Monge-Ampèreness of dually flat manifolds}
The previous section~\ref{S:Duallyflat} recalled the notion of dually flat connections and their impact on the geometry of the manifold. In particular, this has strong implications. One of them is the direct relation to Monge--Ampère operators. 

\, 

Originally, the Monge problem was to find the optimal transport $T:\R^d\to \R^d$ between two distribution of masses $\rho_1$ in a domain $A\subset \R^d$ to a distribution $\rho_2$
on a domain $B\subset \R^d$ such that $$\int_A \rho_1 dx=\int_B \rho_2 dx$$ and the total mass remained preserved under transport. 

\,

If $\cD$ is a strictly convex bounded subset of $\mathbb{R}^n$ then for any nonnegative function $f$ on $\cD$ and continuous $\tilde{g}:\partial \cD \to \mathbb{R}^n$ there is a unique convex smooth function $\Phi\in C^{\infty}(\cD)$ such that 
\begin{equation}\label{E:EMA}
\det \mathrm{Hess}(\Phi)= f, 
\end{equation} in $\cD$ and $\Phi=\tilde{g}$ on $\partial \cD$.

\,

The {\it geometrization of an elliptic Monge--Ampère equation} refers to the geometric data generated by $(\mathscr{D}, \Phi)$, where \begin{itemize}
    \item $\mathscr{D}$ is a strictly convex domain 
    \item $\Phi$ a real convex smooth function (with arbitrary and smooth boundary values of $\Phi$)  
    \end{itemize}
    such that Eq.~\eqref{E:EMA} is satisfied.   
 
\,

\begin{dfn}
    A Monge--Ampère manifold is manifold on which everywhere locally there a smooth potential function $\Phi$ satisfying an equation of Monge--Ampère type.
\end{dfn}

We prove the following geometric statement. 

\,

\begin{thm}\label{T:MA}
Consider a dually flat manifold. Then, this manifold forms a pair of Monge--Ampere manifolds defined in the $\eta$ coordinates (respectively in the $\eta^*$ coordinates). 
\end{thm}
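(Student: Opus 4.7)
The plan is to verify the Monge--Ampère condition directly using the two potential functions $\psi(\eta)$ and $\psi^{*}(\eta^{*})$ that come packaged with any dually flat structure, as recalled in Section~\ref{S:Duallyflat}. Since the notion of Monge--Ampère manifold only demands that, around each point, there exist a smooth potential whose Hessian determinant equals a prescribed nonnegative function, the natural candidates for these local potentials are precisely $\psi$ and $\psi^{*}$.

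First I would fix a point $\cp$ on the dually flat manifold $M$ and choose a small coordinate ball $\mathcal{D}\subset M$ around $\cp$ in the $\nabla$-affine coordinates $\eta=(\eta^{1},\dots,\eta^{n})$. By the result recalled from \cite[Thm 3.4]{Am}, on $\mathcal{D}$ we have $g_{ij}=\partial_{i}\partial_{j}\psi(\eta)$, so $\mathrm{Hess}(\psi)=(g_{ij})$ as a matrix-valued function on $\mathcal{D}$. Since $g_{ij}$ is the Fisher/Riemannian metric it is symmetric and positive definite, hence $\psi$ is strictly convex and
\begin{equation*}
\det\mathrm{Hess}(\psi)(\eta)\,=\,\det\bigl(g_{ij}(\eta)\bigr)\,>\,0.
\end{equation*}
Setting $f:=\det(g_{ij})$, which is a smooth strictly positive function of $\eta$ on $\mathcal{D}$, one sees at once that $\psi$ is a smooth convex solution of the elliptic Monge--Ampère equation
\begin{equation*}
\det\mathrm{Hess}(\Phi)\,=\,f
\end{equation*}
in the sense of Eq.~\eqref{E:EMA}, with $\Phi=\psi$. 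Thus $(\mathcal{D},\psi)$ is a local geometrization of an elliptic Monge--Ampère equation, which, since $\cp$ was arbitrary, realises $M$ as a Monge--Ampère manifold in the $\eta$-coordinates.

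The argument for the dual $\eta^{*}$-coordinates is symmetric: exchanging the roles of $(\nabla,\eta,\psi)$ and $(\nabla^{*},\eta^{*},\psi^{*})$ and using $g^{ij}=\partial^{i}\partial^{j}\psi^{*}(\eta^{*})$, we obtain that $\psi^{*}$ is strictly convex in a small $\nabla^{*}$-affine neighbourhood of $\cp$ with $\det\mathrm{Hess}(\psi^{*})=\det(g^{ij})=1/\det(g_{ij})>0$. So $\psi^{*}$ solves a second, dual Monge--Ampère equation on the image of the same neighbourhood under the Legendre transform $\eta\mapsto \eta^{*}_{i}=\partial_{i}\psi(\eta)$. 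This yields the second Monge--Ampère manifold claimed in the theorem, conjugate to the first via Legendre duality.

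The only real subtlety is ensuring that the ambient domain can be taken to be a strictly convex bounded subset of $\mathbb{R}^{n}$, as in the geometrization recalled above. Here the argument is purely local: strict convexity of $\psi$ (which follows from positive definiteness of $g$) implies that, after possibly shrinking $\mathcal{D}$, its image in the $\eta$-coordinates is a bounded strictly convex Euclidean domain, and similarly for the $\eta^{*}$-side. The boundary data $\tilde g$ appearing in Eq.~\eqref{E:EMA} plays no role in the definition of a Monge--Ampère manifold, so it need not be prescribed independently. The main obstacle, such as it is, is merely to keep track of this local convexity and to check that the Legendre transform sends convex neighbourhoods to convex neighbourhoods, which is a standard consequence of the strict convexity of $\psi$.
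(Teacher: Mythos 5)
Your proposal is correct and follows essentially the same route as the paper's proof: both take the dual potentials $\psi(\eta)$ and $\psi^{*}(\eta^{*})$ whose Hessians give the metric, and use non-degeneracy of $g$ to read off $\det\mathrm{Hess}(\psi)=f$ as an elliptic Monge--Amp\`ere equation in each affine coordinate system. Your version merely adds some welcome detail the paper elides (positive definiteness giving $f>0$, the reciprocal relation $\det(g^{ij})=1/\det(g_{ij})$, and the local strict convexity of the coordinate domain), but the underlying argument is identical.
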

\begin{proof}

As a result of \cite[Theorem 3.5]{Am}: if the manifold is flat with respect to a pair of torsion-free dual affine connections $\nabla$ and $\nabla^*$ then there exists a pair of dual coordinate systems $(\eta,\eta^*)$ such that $\eta$ is a $\nabla$-affine and $\eta^*$ is a $\nabla^*$-affine coordinate system.
   
    \,
In particular, in those coordinate systems everywhere locally the metric tensor is derived by differentiating twice a potential function $\psi(\eta)$ (resp. $\psi^*(\eta^*))$. The metric is non-degenerate which means that $\det(\mathrm{Hess}(\psi))\neq 0$.

 Therefore, we have $\det(\mathrm{Hess}(\psi))=f$ where $f$ is a real function or a constant. There is no need to discuss the Dirichlet conditions for the boundary since we have omitted the boundaries.  The same holds for the coordinates $\eta^*$.  Therefore, we can conclude that dually flat manifolds admit the structure of a Monge--Ampere manifold.
\end{proof}

\begin{cor}\label{C:optimal}
    Everywhere locally on a dually flat statistical manifold  there exists an optimal learning procedure from one point to another, controlled by a pair of Monge--Ampère operators.
\end{cor}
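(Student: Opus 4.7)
The plan is to derive the corollary from Theorem~\ref{T:MA} by interpreting the learning curve between two points as a displacement interpolation in the sense of optimal transport, and then invoking the Brenier--McCann characterisation of optimal maps.

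First I would fix two points $p,q$ in a common local neighborhood $U$ of the dually flat statistical manifold. By Theorem~\ref{T:MA}, $U$ carries a Monge--Amp\`ere structure in both the $\eta$ and the $\eta^*$ coordinate systems: there are convex potentials $\psi(\eta)$ and $\psi^*(\eta^*)$ with $\det(\mathrm{Hess}(\psi))=f$ and $\det(\mathrm{Hess}(\psi^*))=f^*$ for positive smooth densities, and the Legendre relations $\eta^*_i=\partial_i\psi(\eta)$, $\eta^i=\partial^i\psi^*(\eta^*)$ supply two explicit gradient-of-convex maps between the dual charts.

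Next I would read these gradient maps as Brenier maps. Brenier's theorem asserts that, for the quadratic cost induced by the Riemannian (Fisher) metric, the unique optimal transport between two absolutely continuous measures on a convex domain is the gradient of a convex function, and that function is characterized by a Monge--Amp\`ere equation. The potentials $\psi$ and $\psi^*$ are precisely such convex functions, so $\nabla\psi$ and $\nabla\psi^*$ realize the locally optimal transports in the two dual coordinate systems. The learning procedure from $p$ to $q$ can then be defined as the associated displacement interpolation, which in these affine coordinates is simply the straight segment: a $\nabla$-geodesic in the $\eta$-chart or, dually, a $\nabla^*$-geodesic in the $\eta^*$-chart. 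Existence and local uniqueness are immediate because the coordinates are affine, and optimality follows from the Brenier--McCann theorem. The two Monge--Amp\`ere operators control the Jacobians of the respective transports, so they ``govern'' the learning in the sense of the statement.

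The main obstacle is really a matter of definition rather than analysis: one must pin down what ``optimal'' means for a learning curve on a statistical manifold. The natural choice is to take the cost functional to be the Bregman divergence generated by $\psi$ (which on an exponential family reduces to the Kullback--Leibler divergence), so that the straight segments in $\eta$-coordinates minimize the divergence to first order between infinitesimally close distributions. Once this identification is made, the corollary is a direct translation of Theorem~\ref{T:MA}: the pair of potentials $(\psi,\psi^*)$ provided there, together with their Legendre duality, supplies the pair of Monge--Amp\`ere operators which locally control the optimal learning curve.
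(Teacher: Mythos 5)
The paper's own ``proof'' of Corollary~\ref{C:optimal} is a single sentence: it follows from Theorem~\ref{T:MA}. Your core move is the same --- everything is extracted from the pair of dual potentials $(\psi,\psi^*)$ and their Legendre duality supplied by that theorem --- so in that sense you have reconstructed the intended argument and, in fact, supplied the content the paper leaves implicit (the paper only develops the Brenier/optimal-transport interpretation later, in the separate subsection on the statistical version of Monge--Amp\`ereness, and it defines optimality of a learning curve only via Definition~\ref{D:veryoptimal} and the Kullback/Bregman discussion in the Boltzmann section). Your final paragraph, identifying the cost with the Bregman divergence of $\psi$ and the learning curve with the affine segment in the $\eta$- (or $\eta^*$-) chart, is exactly the reading the paper intends.

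One step deserves a caveat. You assert that $\nabla\psi$ and $\nabla\psi^*$ ``realize the locally optimal transports'' by Brenier--McCann. Brenier's theorem produces, for two \emph{prescribed} absolutely continuous measures $\mu$ and $\nu$, the unique convex $\varphi$ with $(\nabla\varphi)_{\diamond}\mu=\nu$. The potential $\psi$ of the dually flat structure is a fixed function of the geometry; its gradient is the Legendre coordinate change $\eta\mapsto\eta^*$, and it pushes forward whatever measure you place on the $\eta$-chart to its image --- it is not, without further argument, the optimal map between the two particular distributions $p$ and $q$ you fixed. To make that identification you would need the pushforward condition, which is what the Brenier factorisation $T=\nabla U\circ h$ in the paper's Section~\ref{S:MAP} is for. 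Since the corollary as stated only claims that the learning procedure is \emph{controlled by} the pair of Monge--Amp\`ere operators (i.e.\ the metric, hence the geodesics and divergences, are Hessians of potentials whose determinants satisfy Monge--Amp\`ere equations), your conclusion stands; but the middle Brenier step, as written, conflates the coordinate change with an optimal map between prescribed marginals and should either be removed or repaired along the lines of the paper's Section~\ref{S:MAP}.
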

\begin{proof}
The proof follows from the above statement.
\end{proof}
\subsection{Statistical version of Monge-Ampèreness}\label{S:MAP}
\,

\subsubsection{}\label{S:8.3} We turn to a probabilistic version of the Monge-Ampère operator. 
Consider the (probability) measure space $(\R^d,\mathcal{F},\mu)$, where $\mathcal{F}$ is formed from Borel sets in $\R^d$ and $\mu$ is a Borel measure.
By $\mathcal{P}(\R^d)$ we denote the space of Borel probability measures on $\R^d$.

\, 

Let $T:\R^d \to \R^d$ be a Borel transformation defined $\mu$-almost everywhere such that if the measure of a measurable set $A\subset \mathcal{K}\subset \R^d$ (where $\mathcal{K}$ is a compact)  vanishes, then the measure of $T^{-1}(A)$ vanishes too. Given a Borel set $M \subset \R^d$ one can {\it pushforward} $\mu$ through $T$, resulting in a Borel probability measure $T_{\diamond}\mu$ on $\R^d$ given by: 
\begin{equation}\label{E:nu}
    T_{\, \diamond\, }\mu[M]:=\mu[T^{-1}(M)].
\end{equation}

By~\cite{Bre1}, there exists a unique map $h$ such that $$T=\nabla U\circ h,$$ where $h$ preserves the volumes and $U$ is a convex Lipschitz function in the neighborhood of $\mathcal{K}$. 

\, 

Obtaining the Monge-Ampère equation requires a few additional steps. Let $V^*$ be the Legendre transform of $U$ that is $V=U^*$. By Brenier's factorisation theorem~\cite{Bre1}, $\nabla U$ and $T$ map $\mathcal{K}$ into the same set $\mathcal{K}^*$ that is 
$T(\mathcal{K})=\nabla U(\mathcal{K})=\mathcal{K}^*$ and $\nabla V(\mathcal{K}^*)=\mathcal{K}.$
For any continuous function $f\in C^1$ on $\mathcal{K}$ we have the following:

\begin{equation}\label{E:MA-Brenier}
    \int_\mathcal{K} f(T(x))dx=\int_\mathcal{K} f(\nabla U(x))dx=\int_{\mathcal{K}^*} f(y)\det (D^2 V(y))dy
\end{equation}
where we have the change of variables $y=\nabla u(x)\in \mathcal{K}^*$ and $x=\nabla V(y)\in \mathcal{K}$.
One deduces the existence of a positive function $r(x)$, integrable in $\mathcal{K}^*$ such that 
\[\int_\mathcal{K} f(T(x))dx=\int_{\mathcal{K}^*} f(y)r(y)dy.\]
Therefore, we have that $V$ is a solution to the Monge--Ampère problem. The function $V$ is convex and we have $\det D^2 V(y)=r(y)$ almost everywhere on $\mathcal{K}^*.$

\,

A direct implication of the  statement in Theorem\ref{T:MA} is the corollary outlined below, for manifolds of probability distributions on finite sets.

\begin{cor}
   Consider a manifold of all probability distributions over a finite set. Then, this forms a Monge--Ampère  manifold. 
\end{cor}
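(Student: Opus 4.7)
The plan is to reduce the statement directly to Theorem~\ref{T:MA} via the classical result of Amari. Concretely, I would first recall from Section~\ref{S:Duallyflat} that the space $\sS$ of probability distributions over a finite set of $2^N$ Boltzmann states is the open simplex $\{p({\bf x})\,|\, 0<p({\bf x})<1,\ \sum_{\bf x} p({\bf x})=1\}$, a smooth manifold of dimension $2^N-1$. By \cite{Am} this manifold carries a dually flat structure: the Fisher metric $g$ together with the pair $(\nabla,\nabla^*)$ of $\pm 1$-connections is torsion-free and flat, and the canonical parameter $\theta$ of the exponential family provides a $\nabla$-affine coordinate system, while the expectation parameter $\eta^*_i=\E[x_i]$ provides the dual $\nabla^*$-affine coordinate system.

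Once the dually flat structure is in hand, the result is an immediate instance of Theorem~\ref{T:MA}. I would point out that the two potential functions required by that theorem are already canonical in this setting: the $\nabla$-potential is the log-partition function
\[
\Psi(\theta)=\log\sum_{\bf x}\exp\Bigl\{\sum_i \theta^i x_i\Bigr\},
\]
and its Legendre transform $\Psi^*(\eta^*)$ is, up to a sign, the Shannon entropy. The Fisher metric is recovered as $g_{ij}=\partial_i\partial_j\Psi$ and dually $g^{ij}=\partial^i\partial^j\Psi^*$. Positive-definiteness of the Fisher metric, which is standard and moreover reflects the strict convexity of $\Psi$ established via Hölder's inequality in the lemma above, forces $\det\mathrm{Hess}(\Psi)$ to be a strictly positive smooth function $f$. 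Hence $\Psi$ satisfies an elliptic Monge--Ampère equation of the form $\det\mathrm{Hess}(\Psi)=f$ on the open simplex, and symmetrically for $\Psi^*$ in the dual coordinates.

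Because the statistical manifold was deliberately introduced as an \emph{open} simplex, there are no boundary components and no Dirichlet data to prescribe; the Monge--Ampère equation holds everywhere locally, matching exactly the definition of a Monge--Ampère manifold stated before Theorem~\ref{T:MA}. I do not anticipate any serious obstacle: the only non-formal input is the compatibility of Amari's dually flat structure with the hypotheses of Theorem~\ref{T:MA}, and both the convexity of $\Theta$ and the non-degeneracy of the Hessian of $\Psi$ are already recorded in the paper. The corollary therefore follows by specialising Theorem~\ref{T:MA} to $M=\sS$ with potentials $(\Psi,\Psi^*)$.
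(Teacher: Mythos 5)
Your proposal is correct and follows exactly the same route as the paper: invoke Amari's result that the manifold of probability distributions over a finite set is dually flat, then specialise Theorem~\ref{T:MA}. The extra detail you supply --- identifying the potentials as the log-partition function and its Legendre transform (negative entropy), and noting that the open simplex has no boundary to worry about --- is a welcome elaboration but not a different argument.
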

\begin{proof}
 A manifold of all probability distributions over a finite set is a {\it dually flat manifold}. This follows from works by Amari \cite{Am}. As a result of applying Theorem \ref{T:MA} we conclude that a manifold of probability distributions over a finite set is a Monge--Ampère  manifold. 
\end{proof}

\section{Applications to Boltzmann manifolds}
In the following we discuss the case of Boltzmann manifolds, without hidden units. A Boltzmann machine without hidden units realizes a probability distribution $p({\bf x};\theta)$ depending on the weight matrix $W=(w_{ij})_{1\leq i,j\leq n}$ such that 
 \begin{equation}\label{E:logP}
     \log p({\bf x};\theta)=\sum\limits_{i>j}w_{ij}x_ix_j-\psi(w),
      \end{equation}
      where $\psi(w)$ is a potential function (also sometimes referred to as the normalizing constant).

\,

A probability distribution of the form Eq.~\ref{E:logP} is realized by a Boltzmann machine with connections $w_{ij}$ as its stationary distribution. $\sB$ is the set of all probability distributions provided by the Boltzmann machines. The connection matrix $W$ specifies each distribution uniquely. 
\begin{dfn}
    A Boltzmann manifold is defined by $\sB$ and its coordinates are given by the $w_{ij}$.
\end{dfn}

\, 

Looking more globally at the manifold of networks, it can happen that there exists a larger manifold into which this object embeds. It then becomes equally  interesting to study the relative geometry of this pair formed from the submanifold and the larger manifold. 

\, 

Typically here, the Boltzmann manifold inherits the geometric properties of $\sS$, since it is a submanifold of $\sS$. Therefore, it is dually flat and forms a Monge--Amp\`ere manifold, by Theorem~\ref{T:MA}. This is a very useful property  one can take advantage of for the learning process, since the Monge--Amp\`ere operator comes related to the notion of optimal transport.

\, 
\subsection{Weight matrices are commutators}
The following lemma enables us to define a parametrization morphism $\pi: \sB \to \sW$, where $\sW$ is the commutator of square matrices and $\sB$ is Boltzmann manifold i.e. an exponential family of probability distributions, being a submanifold of $\sS$.

\, 

\begin{prop}
Assume $\sB$ is a Boltzmann manifold where the Boltzmann machine is without hidden units. Then, the space $\sB$ is parametrised by a space of commutators of $N\times N$ matrices, where $N=|V|$ is the number of stochastic neurons.
\end{prop}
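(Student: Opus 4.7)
The plan is to chain together two facts that the paper has already put in place. First, the Boltzmann manifold $\sB$ is, by the definition given just above the proposition, specified bijectively by its weight matrix $W=(w_{ij})$: the map $\pi:\sB\to\sW$ sending a distribution $p(\mathbf{x};\theta)$ of the form Eq.~\ref{E:logP} to $W$ is injective. Second, $W$ is symmetric (the graph is undirected) and has vanishing diagonal because self-loops are excluded, so $\mathrm{tr}(W)=0$. Hence $\pi(\sB)$ lies inside the affine space $\mathfrak{sl}_N^{\mathrm{sym}}(\R)$ of real symmetric traceless $N\times N$ matrices.

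Next I would invoke Shoda's theorem \cite{shoda}, already cited in the introduction: over a field of characteristic zero, a square matrix is a commutator $[X,Y]=XY-YX$ of two matrices if and only if its trace is zero. Applying this to any $W\in\pi(\sB)$ yields matrices $X,Y$ (of size $N\times N$) with $W=[X,Y]$. Composing with $\pi^{-1}$ then gives a surjective map from the set of pairs $(X,Y)$ with symmetric commutator onto $\sB$, which is precisely the asserted parametrization.

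The order I would execute the steps in is therefore: (1) recall that the coordinates on $\sB$ are the entries $w_{ij}$ of $W$; (2) observe that symmetry plus the no-self-loop constraint force $\mathrm{tr}(W)=0$; (3) cite Shoda to realize $W=[X,Y]$; (4) assemble these into the parametrization morphism $\pi:\sB\to\sW$ whose image is precisely the commutator locus of $N\times N$ matrices, intersected with the symmetric condition.

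The main subtlety, rather than an obstacle, is the non-uniqueness of the commutator representation: many pairs $(X,Y)$ yield the same $W$, and the symmetry condition $[X,Y]^t=[X,Y]$ picks out a proper subvariety of all pairs. So the statement should be read as a surjective parametrization (a scheme-theoretic covering), not as a bijection, and in the write-up I would emphasize this point so that the reader does not expect a canonical global lift $W\mapsto(X,Y)$. Everything else is a direct application of Shoda's theorem and the identification of $\sB$ with its weight matrix.
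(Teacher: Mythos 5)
Your proposal follows the same route as the paper's own proof: identify $\sB$ with its space of symmetric traceless weight matrices via the coordinates $w_{ij}$, then apply Shoda's theorem to write each such $W$ as a commutator $[X,Y]$. Your added remark that the lift $W\mapsto(X,Y)$ is non-unique, so the parametrization by pairs is only surjective, is a clarification the paper omits but does not change the argument.
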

\begin{proof}
 Assume we have a family of networks of same topology and of Bolzman type forming a neural manifold.
\,

 Let $\sW$ be the space of weight matrices and let $\sS$ be the corresponding manifold of probability distributions.

 \, 

 A Boltzmann machine without hidden units realizes a probability distribution $p({\bf x};\theta)$ depending on the weight matrix $W=(w_{ij})$ such that equation \ref{E:logP} is satisfied

 Conversely, a probability distribution $p({\bf x};\theta)$ given by Eq.\ref{E:logP}
is realized by a Boltzmann machine with connections $w_{ij}$ as its stationary distribution.   

 \,
 Each distribution is specified uniquely by a weight matrix. On the manifold $\sS$ coordinates are given by the parameters $w$.

 \, 

 Therefore, there exists a (flat proper) morphism $\pi:\sS\to \sW$, where $\sW$ is the parameter space formed from symmetric matrices $N\times N$ with trace zero. 

 \, 

 By \cite{shoda}, any such matrix is a commutator i.e. for any $W$ such that $Tr(W)=0$ and $W^T=W$ there exists a pair of matrices $X,Y$ such that $[X,Y]=XY-YX=W$.

 \, 

 Therefore, the space $\sS$ is parametrised by a space of commutators of $N\times N$ matrices.
\end{proof}

\subsection{What we mean by learning}
We discuss how our previous results allow us to define some optimal learning trajectories, on the manifold. This is done via the Monge--Ampère operators. Note that following \cite{CM}, this can be related to Topological Quantum Field Theory, whenever a certain notion of flatness of the manifold occurs i.e. whenever the statistical manifold satisfies the associativity equations (i.e. Witten--Dijkgraaf--Verlinde--Verlinde partial differential equations).

\,

We apply this in particular to the Ising--Boltzmann manifold and use the Ackley {\it Et al.} learning methods.


\, 

Consider a Boltzmann manifold. For simplicity assume that there are no hidden elements. Let ${\bf x}$ be a given Boltzmann state and let $q({\bf x})$ be a probability distribution over ${\bf x}$.   The Boltzmann machine is required to modify its connection weights and thresholds so that the stationary distribution $p({\bf x})$ of the Boltzmann machine becomes as close to $q({\bf x})$ as possible.

Different methods exist to achieve this. For instance, the Ackley--Hinton--Sejnowski method provides a learning rule such that the average adjustment of the weights is given by the following formula:

\[\forall i,j\in \{1,\cdots,N\}, \quad  \Delta w_{ij}=\Delta w_{ji}=c\cdot (q_{ij}-p_{ij}),\quad \text{where:}\]
\begin{itemize}
    \item $c$ is a constant;
    \item $q_{ij}=\mathbb{E}_q[x_ix_j]=\sum\limits_{x} q({\bf x})x_ix_j$, is the expected value that both $x_i$ and $x_j$ are jointly excited under the probability distribution $q$;
    \item $p_{ij}=\mathbb{E}_p[x_ix_j]=\sum\limits_{x} p({\bf x})x_ix_j$, is the expected value that both $x_i$ and $x_j$ are jointly excited under the probability distribution $p$.
\end{itemize}
It was shown that this learning rule can be identified with 
\begin{equation}\label{E:Delta}
\Delta w_{ij}=-c\cdot\frac{\partial I(q,p)}{\partial w_{ij}},\end{equation} 
where $I(q,p)$ is the Kullback information. It is given by $I(q,p)=\sum\limits_{\bf x}q({\bf x})\log\frac{q({\bf x})}{p({\bf x})}$.

\begin{prop}
Suppose $\sB$ is a Boltzmann manifold. Then, for infinitesimally close Boltzmann machines with respective weight matrices $w_1$ and $w_2$ the learning rule defined in Eq.\ref{E:Delta} is controlled by a Monge--Ampère operator.     
\end{prop}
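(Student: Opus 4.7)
The plan is to unpack the learning rule of Equation~\ref{E:Delta} in the dually flat structure on $\sB$ established earlier, and then invoke Theorem~\ref{T:MA} to identify the controlling operator.

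First, I would observe that the Boltzmann manifold $\sB$ is a flat exponential family with canonical coordinates $w=(w_{ij})$ and potential function $\psi(w)$ from Equation~\ref{E:logP}. A direct computation starting from $\log p({\bf x};w)=\sum_{i>j}w_{ij}x_ix_j-\psi(w)$ yields $\partial\psi/\partial w_{ij}=\mathbb{E}_{p}[x_ix_j]=p_{ij}$, so the map $w\mapsto p(w)=\nabla_w\psi(w)$ is precisely the Legendre transform connecting the $w$-coordinates to the dual $\eta^*$-coordinates discussed in Section~\ref{S:Duallyflat}. In particular, the expected-statistic coordinates $p_{ij}$ are the $\nabla^*$-affine coordinates, and $w$ is $\nabla$-affine.

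Second, I would rewrite the Kullback information as $I(q,p)=-H(q)-\sum_{ij}w_{ij}q_{ij}+\psi(w)$, from which differentiation gives $\partial I/\partial w_{ij}=p_{ij}-q_{ij}$. This confirms that the Ackley--Hinton--Sejnowski update of Equation~\ref{E:Delta} is literally the negative gradient of $I(q,p)$ in the canonical $w$-coordinates. For two infinitesimally close machines with $w_2=w_1+\Delta w$, the learning step then induces a displacement in the dual expectation coordinates
\[
\Delta\eta^*_{ij}=\sum_{k>l}\frac{\partial^2\psi}{\partial w_{ij}\,\partial w_{kl}}(w_1)\,\Delta w_{kl}=\sum_{k>l}g_{ij,kl}(w_1)\,\Delta w_{kl},
\]
so the linearized learning step is governed by the Hessian of $\psi$. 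By Theorem~\ref{T:MA} applied to the dually flat manifold $\sB$, this Hessian is exactly the object appearing in the local Monge--Ampère equation $\det(\mathrm{Hess}\,\psi)=f$, and dually $\det(\mathrm{Hess}\,\psi^*)=f^*$ in the $\eta^*$-coordinates. Hence the infinitesimal transition between $w_1$ and $w_2$, read through the passage from $w$- to expectation-coordinates, is controlled by the Monge--Ampère operator associated to $\psi$.

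Finally, to tie this to the optimal-transport formulation of Section~\ref{S:MAP}, I would identify $\psi$ with the convex Brenier potential $U$: the gradient map $\nabla\psi$ realizes the transport from the $w$-geometry to the $\eta^*$-geometry, and Brenier's factorization applied to this map (Equation~\ref{E:MA-Brenier}) shows that the learning step is the unique optimal transport whose Jacobian is $\det(\mathrm{Hess}\,\psi)$. The main obstacle is making precise the informal phrase \emph{controlled by}: this is resolved by using $\psi$ simultaneously as the cumulant generating function of the exponential family and as the convex potential of Section~\ref{S:MAP}, so that the same determinant $\det(\mathrm{Hess}\,\psi)$ appears in both the geometric statement of Theorem~\ref{T:MA} and the probabilistic Monge--Ampère identity governing the infinitesimal update $\Delta w$.
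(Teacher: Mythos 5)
Your proposal is correct and follows essentially the same route as the paper's proof: both reduce the infinitesimal learning step to the Hessian metric $g_{ij,kl}=\partial^2\psi/\partial w^{ij}\partial w^{kl}$ arising from the second-order expansion of the Kullback information, identify that Hessian with the potential in the local Monge--Amp\`ere equation of Theorem~\ref{T:MA}, and invoke Brenier's construction from Section~\ref{S:MAP} for the transport interpretation. You simply supply more of the intermediate bookkeeping (the identity $\partial\psi/\partial w_{ij}=p_{ij}$, the verification that the Ackley--Hinton--Sejnowski rule is the negative gradient of $I(q,p)$, and the explicit Legendre-transform link to the dual coordinates) that the paper leaves implicit.
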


\begin{proof}
 L0. Assume we have Boltzmann manifold. 

\,
Suppose that $q$ is given by the weight $w_1$ and $p$ is given by the weight $w_2$ then the Kullback information is given by 

$$ I(p,q)=\sum p({\bf x},w_1)\log \frac{p({\bf x};w_1)}{p({\bf x};w_1)}.$$

\,
 
If we suppose that $p$ and $q$ are infinitesimally close which means that we have $w_1=w$ and $w_2=w+dw$ then the Kullback information is given by the (Hessian) metric $g$, that is: $I(q,p)=\frac{1}{2}\sum g_{ij,kl}dw^{ij}dw^{kl}.$ 

\,

L2. In other words, the metric tensor is given, in the flat coordinates, by the Hessian of a potential function: 

\[g_{ij,kl}=\frac{\partial^2}{\partial w^{ij}\partial w^{kl}} \phi(w).\]

\,

L3. Therefore, given that the metric is by definition non-degenerate (this forms from the construction of dually flat manifolds) the learning rule $\Delta w_{ij}$ defined in equation~\ref{E:Delta} is controlled by a Monge--Ampère operator. 

\,

To construct the potential function explicitly on the space of probability distributions, we can use Brenier's theorem and the construction outlined in Section\ref{S:MAP}.

\end{proof}

\section{Hidden local honeycomb lattices and symmetries related to Frobenius manifolds}
We propose a geometric tool in the scope of improving the current learning methods. This tools relies on the notion of webs. The webs on a domain $\cD$ in $\sS$ are hexagonal if $\cD$ meets the axioms of a Frobenius manifold \cite[p.19-21]{Man99}. A Frobenius manifold is a geometrization of the partially differential Witten--Dijkgraaf--Verlinde--Verlinde equation \ref{E:wdvv}.

One property of such manifolds is that the tangent sheaf $(T_\cD,\circ)$ has the structure of a Frobenius algebra (commutative, associative, unital with a certain invariance of the multiplication under a bilinear symmetric map: $\langle x\circ y,z \rangle= \langle x,y\circ z \rangle$ where $x,y,z \in T_{\cD}$) and that it is endowed with a flatness condition i.e. it is equipped with an affine space of flat connections given by: $$\nabla_{\lambda,X}(Y)= \nabla_{0,X}(Y)
+ \lambda (X\circ Y),$$ where $\lambda$ is a real parameter and $X,Y$ are vector fields.

\, 

In particular, an important result of \cite{C24}, stemming from  
(\cite[Thm 1.5]{Man99}), is that a pre-Frobenius manifold is a Frobenius manifold if and only if $\{\nabla_{\lambda}\}$ forms a pencil of flat connections. By~\cite{C24}, a pre-Frobenius manifold coincides with a Monge--Ampère manifold i.e. a manifold on which everywhere locally a Monge--Ampère equation is satisfied.  

\,

We introduce the notion of very optimal learning curves. Such curves result in a minimal loss of energy during the learning procedure due to the flatness of the space in which they are properly embedded. 

\, 

\begin{dfn}\label{D:veryoptimal}
   A learning curve is a curve on the statistical manifold illustrating a learning procedure. We call it very optimal if the curve lies on a flat manifold, where, everywhere locally, the Monge--Ampère equation is satisfied.    
   
\end{dfn} 

\subsection{Webs, foliations and local honeycomb lattices}
We consider webs in this section.
This implies introducing the tool of local trivial fibrations.

\subsubsection{}
Consider local trivial fibrations of class $C^k$. This is given by a triple $(Y,X,\pi)=\lambda$ where $\pi:Y\to X$  is a projection of $Y$ onto $X$ and $Y$ and $X$ are differentiable (smooth) manifolds of respective dimensions $m$ and $n$ where $m > n$. 
\begin{enumerate}
    \item For each point $x \in X$ the set $\pi^{-1}(x)\subset Y$ is a submanifold of dimension $m-n$ which is diffeomorphic to a manifold $F$;
 \item For each point $x \in X$ there exists a neighbourhood $U_x\subset X$ such that $\pi^{-1}(U_x)$ is diffeomorphic to the product $U_x \times F$, and the diffeomorphism between $\pi^{-1}(U_x)$ and  $U_x \times F$  is compatible with $\pi$ and the projection $pr_{U_x} : U_x \times F \to U_x$.
\end{enumerate}

Given our interest  in the local structure of such manifolds we can often think of them as connected domains of a Euclidean space of the same dimension.

If $T_{\cp}(Y)$ is the tangent space of $Y$ at a point ${\cp}$. A fibre $F$ passing through ${\cp}$ determines in $T_{\cp}(Y)$ a subspace $T_{\cp}(F_x)$, of codimension $r$ say, tangent to $F_x$ at ${\cp}$. We provide $T_{\cp}(Y)$ with a local moving frame $\{e_i,\,  e_\alpha; i=1,...,r; \alpha =r+1,...,m\}$, where $dim F =m - r$ and $dim X =r$. It is natural then to obtain a co-frame $\{\omega^i, \omega^\alpha\}$ dual to $\{e_i, e_\alpha\}$, such that  $\omega^i(e_{\alpha})=0$. Fibers of the fibration are integral manifolds of the system of equations $$\omega^i=0.$$

Since there exists a unique fibre $F$ through a point ${\cp}\in Y$, the system above $\{\omega^i=0, i=1,\cdots, r\}$ is completely integrable. By the Frobenius theorem, the integrability condition is given by 
\[d\omega^i=\sum_{j=1}^r\omega^j\wedge \phi^i_j\]
where $\phi^i_j$ are differential forms.

\, 

Assume $Y$ is an $m$-dimensional manifold. A $k$-dimensional distribution $\theta$ on an $m$-dimensional manifold $Y$, $0\leq k\leq m$ is a smooth field of $k$-dimensional tangential directions.  To each point ${\cp} \in Y$ there is a function which assigns a linear $k$-dimensional subspace of the tangent space $T_{\cp}(Y)$ to ${\cp}$. 


\,


\, 

These surfaces are called the leaves of the foliation. The numbers $k$ is the dimension of the foliation.

\, 

Let $Y=X$ be a differentiable manifold of dimension $nr$. We  say that a $d$-web $W(d,n,r)$ of codimension $r$ is given in an open domain $D\subset Y$  by a set of $d$ foliations of codimension $r$ which are in general position. 
\,

Consider  the following web $W(n+ 1, n, r)$ given in an open domain $D$ of a differentiable manifold of dimension $nr$. In a neighbourhood of
a point $p\in D$ a web $W(n +1, n, r)$ is defined by $n +1$ foliations $\lambda_1,\cdots, \lambda_{n+1}$ (in general position).
Each foliation can be defined by a completely integrable system of Pfaffian equations:
$$\omega_{\zeta}^j=0\quad \text{where} \quad \zeta=1,\cdots, n+1,\,  j=1,\cdots r.$$

Conditions of complete integrability of each of the systems must satisfy an equation of Maurer--Cartan type. Although there are $(n+1)r$ forms $\omega_{\zeta}^j$ on $X^{nr}$ only $nr$ of them are linearly independent. This gives an additional linear equation on  $\omega_{\zeta}^j$. 

\subsubsection{}
Web equivalence classes can be defined. Consider a web with $d$ foliations provided by 
\[\lambda_i(x)=k_i\quad 1\leq i\leq d,\]
where $\lambda_i$ are smooth functions, having non-null gradient and $k_i$ are constants. Then, the function $\lambda_i$ can be replaced by the function $g_i(\lambda_i)$ such that the gradient of $g_i$ is non-null, without changing the $i$-th foliation. This forms an equivalence class of $d$-webs on a given open domain of a manifold $M$.

\subsection{}

In \cite{CMM2022,CMM2023} the utility of web theory for models of databases subject to noise such as spaces of probability distributions on a finite set
turned out to be a successful tool for unraveling hidden symmetries of those spaces.

 This allowed us to demonstrate the existence of mathematical structures describing symmetries of relevant geometries where Commutative Moufang Loops (CML) play an important role.  Webs can be hexagonal, algebraizable, isoclinic, parallelizable, Grassmanniable. 

\begin{rem}
    Note that on a Frobenius (sub)manifolds, webs satisfy all those properties. 
\end{rem}
\,

 The left loops (loops with a left type of multiplication) are in bijection with left homogeneous spaces (which are equipped also with a left type of multiplication).  Therefore, we can state that left loops and  left homogeneous spaces describe the same structure on a given affine manifold with an affine connection.

\, 

Consider an affine  manifold endowed with its affine connection. The space of probability distributions on a finite set is an example of such an affine connection manifold~\cite{CM}.

\, 

 For those types of manifolds, one can introduce a specific type of local loop in the neighborhood of any point (see \cite{Kika} for details concerning the properties of the loop).

These loops are uniquely determined by means of the parallel transport along a geodesic, see \cite{Sa1,Sa2}. 

This family of local loops defines a covering of the manifold and uniquely determines all affine connections. 

Therefore, algebraic webs are an unavoidable structure controlling the geometrical structure of the manifold under investigation.


\,


\subsection{Hexagonal and parallelizable webs} 
The class of {\it hexagonal} webs is important. Let $\cD$ be, for simplicity, a two dimensional domain. Take ${\cp}\in \cD$ a point. Consider three (different) regular families of smooth curves on $\cD$ in general position. This constructs a web.

In the neighbourhood of the point ${\cp}$ this web allows the construction of a family of hexagonal figures. 
Depending on the geometry of the domain the hexagons might not be closed. If that is the case the web is not hexagonal. 

\,

Suppose we have families of 3-webs each  formed from a set of three parallel lines. The superimposition of those three families outlines closed hexagons. The case where the 3-webs are formed by three families of parallel straight lines is called {\it parallelizable}. If a 3-web is equivalent to a parallel web then it is called parallelizable. 


\,

Let us invoke the following statement:

\begin{thm}[Thm.1.5.2 and Cor. 1.5.4~\cite{Go88}]\label{T:Ash}
A web $W(n+1,n,r)$ for $n>2$ is parallelizable if and only if it is torsionless. 
Moreover, given an $(n+1)$-web being parallelizable, all its $(k +1)$-subwebs are also parallelizable.

\end{thm}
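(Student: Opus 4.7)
The plan is to encode the web combinatorially via its defining Pfaffian systems and then extract the obstruction data from the resulting structure equations. Fix an $(n+1)$-web on an $nr$-dimensional manifold with $n>2$, and let $\omega^j_\zeta=0$, $\zeta=1,\ldots,n+1$, $j=1,\ldots,r$, be the Pfaffian systems defining its foliations. Only $nr$ of the $(n+1)r$ one-forms $\omega^j_\zeta$ are linearly independent; after an appropriate normalisation one may assume $\sum_\zeta \omega^j_\zeta=0$ for each $j$. Applying Frobenius integrability to each foliation yields structure equations of the shape
\[
d\omega^j_\zeta \;=\; \omega^k_\zeta\wedge\theta^j_k \;+\; \sum_{\eta<\xi} a^j_{kl,\eta\xi}\,\omega^k_\eta\wedge\omega^l_\xi,
\]
where $\theta^j_k$ is a web connection form and the coefficients $a^j_{kl,\eta\xi}$ assemble into the torsion tensor of the web.

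The easy direction (parallelizable $\Rightarrow$ torsionless) I would dispatch by direct inspection: a local diffeomorphism to a parallel model pulls each $\omega^j_\zeta$ back to a constant linear combination of the closed one-forms $dx^j_i$, so every $d\omega^j_\zeta$ vanishes and both $\theta^j_k$ and the torsion coefficients may be chosen to be zero. For the converse I would apply $d^2=0$ to the structure equations above. This produces Bianchi-type identities coupling the torsion, the curvature $\Omega^j_k=d\theta^j_k-\theta^j_l\wedge\theta^l_k$, and their covariant derivatives. The decisive observation is that with at least four foliations ($n>2$), one can skew-symmetrise the identities across three distinct foliation indices; this forces $\Omega^j_k$ to lie in the ideal generated by the torsion tensor. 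Hence, in the multi-web setting, torsion-free automatically implies flat, which is precisely the gap with the $n=2$ case where curvature is an independent obstruction.

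Once flatness has been established, a gauge transformation reduces $\theta^j_k$ to zero locally, each $\omega^j_\zeta$ becomes closed, and the Poincaré lemma supplies primitives $x^j_\zeta$ whose level sets are the leaves of the respective foliations. This produces the required local diffeomorphism onto a parallel web of affine hyperplanes. For the \emph{moreover} clause, once an $(n+1)$-web has been locally identified with a parallel configuration, any $(k+1)$-subweb is obtained simply by discarding some of the foliations, and the same identification continues to exhibit it as a parallel sub-configuration; so the subweb is parallelizable with no further argument.

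The main obstacle I anticipate is the algebraic step ``torsion-free $\Rightarrow$ flat'' in the converse. It is precisely here that the assumption $n>2$ is used, and I expect most of the technical effort to lie in bookkeeping the multi-index Bianchi identities and tracking the symmetry types of the $a^j_{kl,\eta\xi}$ carefully enough to verify that the extra foliations really eliminate all curvature degrees of freedom. Once that linear-algebraic count is in place, integration and the subweb inheritance argument are essentially automatic.
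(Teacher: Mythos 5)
The paper does not prove this statement at all: it is imported verbatim from Goldberg's monograph (Thm.~1.5.2 and Cor.~1.5.4 of \cite{Go88}), so there is no internal argument to compare against. Measured against the actual proof in that source, your outline follows the standard route: structure equations from the Pfaffian systems, the web torsion and curvature as the obstructions, the prolonged (Bianchi-type) identities from $d^2=0$, and the observation that $n>2$ is what makes torsion the only independent obstruction. The easy direction and the \emph{moreover} clause (a parallel model restricted to a subfamily of foliations is still parallel) are handled correctly.

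The genuine gap is that the one step carrying all the mathematical content --- ``for $n>2$, vanishing torsion forces vanishing curvature'' --- is announced rather than proved, and you yourself flag it as the anticipated obstacle. As written, ``skew-symmetrise the identities across three distinct foliation indices; this forces $\Omega^j_k$ to lie in the ideal generated by the torsion tensor'' is a conjecture about how the bookkeeping will come out, not a derivation. The actual mechanism (this is Goldberg's computation) is that exterior differentiation of the structure equations produces, among the coefficients of the linearly independent $3$-forms $\omega^j_\xi\wedge\omega^k_\eta\wedge\omega^l_\zeta$ with three \emph{distinct} foliation indices $\xi,\eta,\zeta$, relations expressing the curvature tensor algebraically in terms of the torsion tensor and its covariant derivatives; such triples of distinct indices exist in sufficient supply only when there are at least four foliations, i.e.\ $n>2$, and general position is what guarantees the independence needed to equate coefficients. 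Until that identification is carried out explicitly, the claim that torsion-free implies flat --- and hence the entire converse direction --- is unsupported. The remaining steps (gauge-reducing a flat torsion-free connection to zero, closedness of the $\omega^j_\zeta$, Poincar\'e lemma, level sets of the primitives) are fine once that lemma is in hand.
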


We apply this statement directly to the manifolds of probability distributions. 

\begin{prop}\label{T:para}
Let $M$ be a statistical manifold satisfying the Frobenius manifold axioms. Suppose that it admits a web $W(n+1,n,r)$, with $n>2$. Then, the web $W(n+1,n,r)$ is parallelizable and its $(k+1)$-subwebs are parallelizable too.
\end{prop}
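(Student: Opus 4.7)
The plan is to reduce the statement to a direct application of Theorem~\ref{T:Ash}. That theorem says that for $n>2$, parallelizability of a web $W(n+1,n,r)$ is equivalent to the vanishing of its torsion, and that once parallelizability is established the $(k+1)$-subwebs inherit the property automatically. So the whole argument collapses to a single point: showing that the given web on the Frobenius manifold $M$ is torsionless.

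First I would invoke the structural features of a Frobenius manifold recalled earlier in this section. The tangent sheaf carries a commutative associative product $\circ$, and there is a pencil of flat connections $\nabla_{\lambda,X}(Y)=\nabla_{0,X}(Y)+\lambda(X\circ Y)$, each member of which is torsion-free. In particular $\nabla_0$ is a torsion-free flat affine connection, so near any point there is a system of flat coordinates in which $\nabla_0$ is the coordinate derivative. These coincide with the $\nabla$-affine coordinates discussed in Section~\ref{S:Duallyflat}, so the Frobenius structure is entirely compatible with the dually flat affine framework used throughout the paper.

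Next I would match the web data to this flat structure. Each foliation of the web is locally cut out by a completely integrable Pfaffian system $\omega^j_\zeta=0$, and by the Frobenius theorem of foliation theory the leaves are realised as level sets of smooth first integrals. In the flat $\nabla_0$-coordinates these level sets become affine hyperplanes, i.e.\ the foliations align with the parallel affine structure. This is the step where the Frobenius manifold axioms are used essentially: the flat pencil forces the foliation data to be compatible with $\nabla_0$, so that the structure equations reduce to $d\omega^j_\zeta=\omega^k_\zeta\wedge\phi^j_{\zeta,k}$ without mixed terms involving forms attached to other foliations. The vanishing of these mixed terms is precisely the web-theoretic torsion. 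Having produced a torsion-free $W(n+1,n,r)$ with $n>2$, Theorem~\ref{T:Ash} immediately yields parallelizability and, by its ``moreover'' clause, parallelizability of every $(k+1)$-subweb.

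The main obstacle I expect is the middle step: rigorously matching the web-theoretic torsion with the affine torsion of $\nabla_0$. Concretely, one has to argue that the intrinsic Chern connection of the web coincides (or differs in a torsion-neutral way) with the restriction of $\nabla_0$ to the leaves, using both the flatness of $\nabla_0$ and the associativity constraint~\eqref{E:wdvv} which controls how the product $\circ$ deforms $\nabla_0$ along the pencil without introducing torsion. This identification is the technical heart of the argument; once it is in place, Theorem~\ref{T:Ash} does the rest.
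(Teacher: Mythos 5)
Your proposal is correct and takes essentially the same route as the paper: both arguments reduce the claim to Theorem~\ref{T:Ash} by asserting that the Frobenius structure forces the web's (Chern) connection to be torsion-free, after which parallelizability of the web and of all its $(k+1)$-subwebs follows immediately. In fact your sketch supplies more justification for the torsion-vanishing step (via the flat pencil $\nabla_\lambda$ and the alignment of the foliations with the $\nabla_0$-affine structure) than the paper's own one-line assertion, and you correctly identify the matching of web-theoretic torsion with the affine torsion of $\nabla_0$ as the point that still needs rigorous treatment.
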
 
\begin{proof}
By hypothesis if $M$ is a Frobenius manifold then the Chern connection on it vanishes and is torsionless. 
The rest of the proof follows from the application theorem~\ref{T:Ash}. Therefore, the web $W(n+1,n,r)$ is parallelizable, as well as all its  $(k +1)$-subwebs. 
\end{proof}

According to~\cite{Go88} a web $W(n+1,n,r)$ is said to be hexagonal if all $n+1\choose 3$  its 3-subwebs are hexagonal.

\, 

The vanishing of the symmetric part of the curvature tensor is a necessary and sufficient condition for hexagonality of a multicodimensional 3-web ~\cite[Sec 1.2]{Go88}.

\,

We prove the following statement. 
\begin{prop}
Consider a Frobenius manifold. Let $W(2,3,r)$ be a web, where $r\geq 1$. Then, this web  is hexagonal.  
\end{prop}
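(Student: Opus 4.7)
The plan is to apply the infinitesimal criterion recalled just before the proposition: a multi-codimensional 3-web is hexagonal if and only if the symmetric part of its curvature tensor vanishes \cite[Sec 1.2]{Go88}. Since the proposition concerns a 3-web of codimension $r$ in $\cD$, this criterion applies directly, and the task reduces to checking that the symmetric part of the associated web-curvature vanishes.

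The structural input comes from the Frobenius hypothesis. By the discussion following Definition \ref{D:veryoptimal}, and by \cite[Thm 1.5]{Man99}, the tangent sheaf of a Frobenius manifold carries a pencil $\{\nabla_{\lambda}\}$ of flat, torsion-free affine connections; in particular there is a flat, torsion-free connection $\nabla_{0}$, which is the same input used in Proposition \ref{T:para} under the label that ``the Chern connection vanishes and is torsionless''. Passing to adapted flat coordinates for $\nabla_{0}$, which exist everywhere locally, the three foliations defining the web can be presented in the Pfaffian form $\omega^{j}_{\zeta}=0$ introduced earlier, and the induced web connection coincides with the ambient flat connection on the chart. Its curvature tensor is then identically zero, so in particular its symmetric part vanishes. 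Invoking the criterion of \cite[Sec 1.2]{Go88} immediately yields hexagonality.

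The main obstacle is precisely the identification in the middle step: one must verify that the connection whose curvature enters the hexagonality criterion is the flat Chern-type connection supplied by the Frobenius structure, rather than an a priori different web-adapted connection. Once this identification is justified via adapted flat coordinates for $\nabla_{0}$, vanishing of the symmetric part of the curvature is automatic. As an independent cross-check, whenever the number of foliations exceeds three, Proposition \ref{T:para} supplies parallelizability of the web, and three families of parallel affine pencils are known to close up to the hexagonal figures described in the discussion preceding the proposition, confirming the conclusion.
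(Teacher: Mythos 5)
Your proposal follows essentially the same route as the paper: the Frobenius hypothesis supplies a flat, torsion-free (Chern) connection, whose vanishing curvature forces the symmetric part of the web curvature to vanish, and the hexagonality criterion of \cite[Sec 1.2]{Go88} (the paper instead cites \cite{Ak} for this equivalence) then gives the conclusion. You are in fact more careful than the paper in flagging the one genuinely delicate point --- that the connection entering the hexagonality criterion is the web's own Chern connection, which must be identified with the flat connection coming from the Frobenius structure --- a step the paper simply asserts without justification.
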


\begin{proof}
 Let $W(2,3,r)$ be a web on a Frobenius manifold. 
 Having the Frobenius condition implies that the Chern connection is flat and that the torsion vanishes. 
 By Prop. \cite{Ak} the web is hexagonal.
\end{proof}

\subsection{Parallelizable webs of the simplex}

Let $(\Omega,\cF,P)$ be a probability space, where the sample space is a finite discrete space $\Omega =\{\omega_0,x_1,\dots,\omega_d\}$  where the cardinality $\#\Omega =d+1$ and $\cF$ is the algebra of all measurable
subsets of $\Omega$, satisfying $\#\cF=2^{n+1}$. This algebra is generated by the family
$\{\{\omega_0\},\{\omega_1\},\dots \{\omega_d\}\}$  of the $d+1$ one point subsets of $\Omega$.

\,

The finite probability $P$, absolutely continuous with respect to the uniform measure, is well defined by its distribution function:
\begin{equation}
p^i=P[\{\omega_i\}], \qquad
 0< p^i<1,\qquad \sum\limits_{i=0}^d p^i=1.
\end{equation}

The distribution $p$ appears as a random variable $p:X \to
[0,1]$ which can be written as:
\begin{equation}
P=\sum_{i=0}^d p^i\delta_{\omega_i},\quad\text{ with } \quad0< p^i<1,\quad \sum\limits_{i=0}^d p^i=1.
 \end{equation}
where\[\delta_{\omega_k}(\omega)= \begin{cases}
1&\text{ if  $\omega=\omega_k$},\\ 0& \text {otherwise}.
\end{cases}
\]

We will construct some webs for the 2-dimensional case. This is naturally generalised to any dimension $n$.
Let $(\Omega_3, \A_3)$ be a measurable space where $\Omega_3=\{\omega_{1},\omega_{2},\omega_{3}\}$ and $\cF_3$  the algebra of the subset of $\Omega$, of cardinality $\# \cF_3=2^{3}$ (the algebra of events). 

Any probability distribution $P$ on the algebra $\A$ is given by a triplet $(p^{1},p^{2},p^{3})$ such that \[ p^{j}=P[\{\omega_{j}\}],\quad \text{where} \quad j=1, 2, 3.\]. 

We associate a probability distribution $P$ with a point of the simplex i.e.
\begin{equation}\label{E:11.1}
P\leftrightarrow \mathbf{p} \in \left\{(p^{1},p^2,p^3) \mid  \ p^{1}+p^2+p^3=1\ ;\ \forall \, j,\ p^{j}\geq 0 \right\}.
\end{equation}

In what follows we work on the simplex. Let $\vec e_1=(1,0,0),\, \vec e_2=(0,1,0),\, \vec e_3=(0,0,1)$ be the canonical basis of the affine space $\R^3$ where we fixed the origin at $(0,0,0)$. The probability distribution concentrated on the elementary events $\{\omega_{k}\}, \ P[\{\omega_k\}]=1$, can be identified with the basis vectors
\begin{equation}
\vec e_{k}=(e_{k}^{1},e_{k}^{2}, e_{k}^{3}), \text{ with } e_{k}^{j}=\delta_{k}^{j}.
\end{equation}

The family $\{\vec e_{k}\}_{k=1}^{3}$ defines a canonical basis of $\mathbb{R}^{3}$ ($\mathbf{e}_{i} \cdot \mathbf{e}_{j} = \delta_{i}^{j}$, where $\cdot$ denotes the scalar product in $\mathbb{R}^{3}$). The probability $P$ is thus in bijection with the following vector:
\begin{equation}\label{E:vectp}
\vec p= p^{1}\vec e_1+ p^2\vec e_2+p^3\vec e_3, \text{ where } p^{j}\geq 0,\ p^1+p^2+p^3=1.
\end{equation}

\begin{center}
\begin{tikzpicture}[scale=1.0]
 \draw(0.12,-0.2) node{$\mathbf{0}$};
 \draw [black,dashed,->] (0, 0) -- (0,2.9);
 \draw[black,dashed,->]  (0, 0) -- (3.0,-0.43);
\draw [black,dashed,->] (0, 0) -- (-1.98,-1.0);
\draw [black] (0, 3) -- (-2,-1);
\draw [black] (0, 3) -- (3.2,-0.45);
\draw [black] (-2, -1) -- (3.3,-0.45);
\draw(0.4,3.2) node{$\be_{3}=(0,0,1)$};
\draw(4.2,-0.5) node{$\be_{2}=(0,1,0)$};
\draw (-1.6,-1.2)node{$\be_{1}=(1,0,0)$};
\draw(0.2,1.5) node{$\vec{e}_{3}$};
\draw(1.2,0.1) node{$\vec{e}_{2}$};
\draw (-0.8,-0.1) node{$\vec{e}_{1}$};
\end{tikzpicture}
\end{center}

Any point $\bp$ of the simplex corresponds to a probability $P$ where $P[\{\omega_{j}\}]=p^{j}$ is in bijection with the  point $\vec p$ of the space $(\mathbb{R}^{3}, (0,0,0))$ and verifying~\eqref{E:vectp}.

\smallskip 

A point $\bp$ lies on the face of a simplex  with vertices $\mathbf{e}_{(j_{1})},\dots,\mathbf{e}_{(j_{k})}$, if and only if $P[\{\omega_{j(1)}\}]+\dots+P[\{\omega_{j(k)}\}]=1$. Thus, the face of the simplex forms the collection $\mathrm{Cap}(\Omega',\A')$ of probability distributions on the finite algebra $\A'$, where $\Omega'=\Omega-\cup_{i=1}^{k}\{\omega_{j(i)}\}$ and $\A'$ the algebra of subsets of $\Omega'$.

\, 

We recall that a Cevian (Ceva line) is a line starting from a vertex and connecting the opposite face. The following construction allows to use Cevians as webs for the probabilistic simplex.
This construction permits also a description in terms of a continuous one parameter groups and enables us to introduce the notion of derivation on the simplex.

\begin{prop}
Assume the manifold of probability distributions is defined as above. Then, there exists an $(n+1)-$web which is parallelizable and given by Cevians lines of the simplex.
\end{prop}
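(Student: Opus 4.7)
The plan is to exhibit the $n+1$ Cevian foliations explicitly in the canonical affine coordinates of the dually flat structure, and to observe that in those coordinates every Cevian becomes a coordinate or diagonal straight line, making the web manifestly parallel.

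First, I would fix notation. For each vertex $\mathbf{e}_k$, $k=1,\dots,n+1$, let $\mathcal{L}_k$ be the foliation of the open simplex whose leaves are the Cevians through $\mathbf{e}_k$. Parametrising such a Cevian as $t\mathbf{e}_k+(1-t)\bq$ with $\bq$ a point of the opposite face, one sees that the leaf of $\mathcal{L}_k$ through an interior point $\bp$ is cut out by the conditions that the ratios $p^i/p^j$ be constant for all $i,j\neq k$. Equivalently, the leaf is a fibre of the projective map $\bp\mapsto[p^1:\dots:\widehat{p^k}:\dots:p^{n+1}]$. The tangent direction at $\bp$ to this leaf is proportional to $\bp-\mathbf{e}_k$. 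Using the affine independence of $\{\mathbf{e}_k\}_{k=1}^{n+1}$ and the fact that $\bp$ lies in the interior, any $n$ of the $n+1$ directions $\bp-\mathbf{e}_k$ are linearly independent; this yields the general position condition, so the $n+1$ foliations genuinely form a web.

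Next, I would pass to the canonical affine coordinates of the dually flat structure, namely $\theta^i:=\log(p^i/p^{n+1})$ for $i=1,\dots,n$, which gives a smooth diffeomorphism from the open simplex onto $\R^n$. A direct computation transports each Cevian foliation to $\theta$-coordinates. For $k\leq n$, the Cevian from $\mathbf{e}_k$ through $\bp$ keeps $p^i/p^{n+1}$ constant for every $i\neq k$, so its leaf is cut out by $\{\theta^i=\mathrm{const}\}_{i\neq k}$ and is therefore the affine line through $\bp$ parallel to $\partial_{\theta^k}$. For $k=n+1$, the Cevian keeps $p^i/p^j$ constant for $i,j\leq n$, i.e.\ every difference $\theta^i-\theta^j$ is constant, so its leaf is the affine line through $\bp$ in the direction $(1,1,\dots,1)\in\R^n$.

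Finally, I would conclude. In $\theta$-coordinates the Cevian web becomes the parallel web whose leaves are straight lines in the $n+1$ fixed directions $\mathbf{e}_1,\dots,\mathbf{e}_n,(1,1,\dots,1)$ of $\R^n$, which lie in general position. Since $\bp\mapsto\theta(\bp)$ is a diffeomorphism of the open simplex, the Cevian web is equivalent, in the web-theoretic sense recalled above, to a parallel web; hence it is parallelizable. The only mildly delicate point I foresee is the bookkeeping in arbitrary dimension: one must verify that a single $1$-dimensional Cevian leaf issuing from $\mathbf{e}_k$ is indeed cut out by the $n-1$ relations $\{\theta^i=\mathrm{const}\}_{i\neq k}$ (and analogously that the relations $\theta^i-\theta^j=\mathrm{const}$ describe $\mathcal{L}_{n+1}$), and to check that the resulting $n+1$ tangent directions in $\R^n$ are generic. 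Beyond this, the argument is just the observation that the exponential chart $\theta$ straightens every Cevian into a coordinate or diagonal affine line.
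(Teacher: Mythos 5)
Your proof is correct, but it takes a genuinely different and more explicit route than the paper. The paper argues in two softer ways: it invokes the classical Ceva relation (Eq.~\ref{E:1}), which characterises triples of Cevians that are either concurrent or parallel, to claim that the Cevians ``can be drawn as parallel lines'' on the $2$-face, appeals to torsionlessness of the web connection coming from dual flatness for the general parallelizability criterion, and then extends to higher dimensions by induction. You instead exhibit the straightening diffeomorphism concretely: passing to the canonical exponential coordinates $\theta^i=\log(p^i/p^{n+1})$ of the dually flat structure, each Cevian foliation $\mathcal{L}_k$ becomes literally a family of parallel affine lines (in the directions $\partial_{\theta^1},\dots,\partial_{\theta^n}$ and $(1,\dots,1)$), which is exactly what the definition of a parallelizable web demands. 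This buys two things the paper's proof does not make visible: it avoids any induction (the computation is uniform in $n$), and it identifies the Cevians as $\nabla$-geodesics of the exponential connection, thereby making precise the paper's one-line remark that parallelizability ``follows from the fact that the connection of the webs is torsionless because the manifold is dually flat.'' One caveat applies equally to your argument and to the paper's statement: for $n>2$ the $n+1$ foliations by Cevian \emph{lines} have codimension $n-1$ in an $n$-dimensional domain, so they do not literally fit the $W(n+1,n,r)$ framework with $\dim=nr$ recalled earlier in the paper; you are proving parallelizability of a curvilinear $(n+1)$-web, which is the natural reading of the proposition, but it would be worth saying so explicitly.
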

Before we start a proof  {\it stricto sensu } let us describe the construction. The construction above works for any dimension.

There are two important families of ``vectors''.
\begin{itemize}
\item The family $\{\vec y_{k}(\bp)\}_{k=1}^{n}$ of non independent ``vectors'' in $\mathbb{R}^{n}$ starting at the point  $\bp$ and ending at vertex $\be_{k}$. This family of vectors lie on the simplex:
\begin{equation}\label{E:Y1}
\vec y_{k}(\bp)= \vec e_{k}-\vec p,\ k=1,\dots,n
\end{equation}
\smallskip 
\item The family $\{\vec z_{k}(\bp)\}_{k=1}^n$ of non independent ``vectors'' in $\mathbb{R}^{n}$. This starts at the point $\bq_k$, indexed by the vector $\vec q_{k}$, where the Cevian intersected the $k$-face,  opposite to the vertex from which is is issued. These vectors lie on the simplex: 
 \begin{equation}\label{E:Z1}
 \vec z_{k}=(1-p^{k})^{-1}\vec y_{k}(\bp)=\vec e_{k}- \vec q_{k},
\end{equation}
where the vector 
\begin{equation} \vec{q}_{k}=\sum_{i\ne k}q_{k}^{i}\vec{e}_{i},\text{ with } q_{k}^{i}= \frac{p^{i}}{1-p^{k}},
\end{equation}
corresponds to a conditional probability distribution $P[\ -\mid \Omega\setminus \{\omega_{k}\}]$, lying on the $k$-th face of the simplex. For the 2-dimensional case this is corresponds to the edge.
\end{itemize}

From the definition~\eqref{E:Y1} of  $\mathbf{y}_{k}(\mathbf{p})$ we can as well introduce the following generating systems of vector fields on the simplex:
 \begin{equation} \label{E:X1}
\vec x_{k}(\bp)=p^{k}\vec y_k,
\end{equation}

where this generating system verifies the condition
\begin{equation} \label{E:X2}
\sum_{k=1}^{n}\vec x_{k}(\bp)=0.
\end{equation}

\, 

A point $\mathbf{p}$ in the interior of the simplex can be expressed as:
 \begin{equation}\begin{aligned}
 \vec p &= p^{k}\vec e_{k}+ (1-p^{k})\sum_{i\ne k}q^{i}_{k}\vec e_{i}, \text{ with } 0<p^{k}<1,\\
 &=p^{k}\vec e_{k}+(1-p^{k})\vec q_{k},
 \end{aligned}
 \end{equation}
and this for {\it each } choice of the vertex $\vec e_{k}$.

\begin{center}
\begin{tikzpicture}[scale=1.5]
 \draw(0.1,0) node{$\mathbf{0}$};
 \draw [black,dashed] (0, 0) -- (0,3);
 \draw(0,3.1) node{$\mathbf{e}_{3}$};
\draw[black,dashed]  (0, 0) -- (3.5,-1.5);
\draw(3.5,-1.5) node{$\mathbf{e}_{2}$};
\draw [black,dashed] (0, 0) -- (-3.5,-0.5);
\draw(-3.5,-0.5) node{$\mathbf{e}_{1}$};
\draw [black] (0, 3) -- (-3.3,-0.46);
\draw [black] (0, 3) -- (3.3,-1.45);
\draw [black] (3.3, -1.45) -- (-3.3,-0.46);
\draw[black]  (0, 3) -- (-1.45,-0.75);
\draw[black,dashed]  (0, 0) -- (-1.5,-0.75);
\draw(-1.5,-0.85) node{$\mathbf{q}_{3}$};
\draw[black,dashed]  (0, 1.6) -- (-0.7,1.2);
\draw[black,very thick,dashed,->]  (0, 0) -- (-0.7,1.2);
\draw(-0.8,1.2) node{$\mathbf{p}$};
\draw(0.15,0.9) node{$p^{3}$};
\draw(0.2,2.3) node{$1-p^{3}$};
\draw(-0.6,0.6) node{$\vec p$};
\draw[black,very thick,->]  (-0.7,1.2) -- (-0.31,2.2);
\draw(-0.8,1.7) node{$\vec x_{3}(p)$};
\end{tikzpicture}
\end{center}

Then the independent vectors $ \vec x_{1}(\bp),\vec x_{2}(\bp),\dots, \vec x_{n}(\bp)$ define an invariant generating system under the permutation of the vertices. Moreover, it allows to define a barycentric coordinate system. Note that this barycentric coordinate system is just the coordinate system induced by the three Ceva lines intersecting at $\bp$.

Consider the statistical manifold defined for a discrete, finite sample space. This corresponds to an $n-$simplex. In the case of a manifold $S$ of multinomial distributions we can use  as coordinate systems $\theta=(\theta^1,\dots,\theta^n)$ such that:  

\[\theta^1=p_1,\dots,\,  \theta^n=p_n,\, \theta^{n+1}=p_{n+1}\]
and satisfying the condition \[\theta^1+\dots+\theta^{n+1}=1.\] 

As for the tangent space, it has $n$ generators $\partial_1,...,\partial_n$ 

where 
\[\partial_il(x,\theta)=\frac{\delta_{x_i}}{\theta^i}-\frac{\delta_{x_{n-1}}}{\theta^{n+1}}.\] 
 
In the $\theta^{k}=p^{k}, \ k=0, \dots, n$ coordinate system this space can be realized as an open simplex whose the vertices are given by $\mathbf{e}_{k}=\underbrace{(0,\dots,0,\underset{k}1,0,\dots,0)}_{d+1}$.

\begin{rem}
    An interesting remark stems from this change of coordinates. Let us define $(\eta^i)_{i=1}^{n+1}$ by \[\eta^1=2\sqrt{p_1},\dots, \eta^{n+1}=2\sqrt{p_{n+1}},\]
so that \[\sum_{i=1}^{n+1} (\eta^i)^2=4\] which is exactly the equation of a sphere of radius 2. This interesting change of coordinates presents the manifold as a part of the $n$-dimensional sphere with radius 2, embedded in the $(n+1)$-Euclidean space with coordinate system $(\eta^1,...,\eta^n)$. 
\end{rem}

Since we consider only those $(\theta)_{i=1}^n$ such that $\sum_{i=1}^n \theta_i=1$, we are thus investigating $n$ webs on a given $(n-1)$-face of the simplex. 

\,

The proof is by induction, presented in the geometric construction below.

\begin{proof}
Assume ${\bf e(1), e(2),... ,e(n+1)}$ is the set of vertices of the simplex. We consider the $(n-1)$-face corresponding to $\sum_{i=1}^n \theta_i=1$. Let us show that we can construct the $n$-web using the Cevians and that this forms a paralleilzable web. The   paralleilzability property follows from the fact that the connection of the webs is torsionless. In turn, the connection is torsionless because the manifold is dually flat in the sense of Amari. However, it is possible to show the parallelizability property directly using Ceva's theorem. 

\,

Consider the low dimensional case. Let $e(1), e(2),e(3)=\{A,B,C\}$. 
By Ceva's relation we have the following relation:

\begin{equation}\label{E:1}\frac{A'B}{A'C}\times\frac{B'C}{B'A}\times\frac{C'A}{C'B}=-1.\end{equation} 

This relations holds for a triple of lines $(AA'), (BB')$ and $(CC')$ which are either concurrent in one point $k$ or parallel.

\,

In the first case, one draws lines from each vertex to its opposite edge, being concurrent at one point $k$ and satisfying the relation~\ref{E:1}. The second, is obtained by drawing three parallel lines, where each line contains one unique vertex of the triangle and the relation~\ref{E:1} is satisfied.

It is easy to verify that the construction above of Cevians on the 2-face of a 3-simplex corresponds to the definition of a 3-web. Applying the theorem of Ceva, we can draw those Cevians as parallel lines.  This happens to be exactly the definition of a parallelizable webs.

\,

The construction to higher dimensions is done by induction. This essentially mimics the steps of the proof in \cite{buba}. As a result, we have $(n+1)$-webs given by Cevians on an $n$-dimensional face and those webs are are parallelizable.

Therefore, we have ended the proof. 

\end{proof}

\subsection{Optimal learning and coordinates on a honeycomb lattice}

\,

 Let $\sS$ be a statistical manifold of exponential type (we consider this for continuous/discrete sample spaces). This object takes its roots in~\cite{Lehman} and \cite{Ch}, namely Markov categories for the latter reference. 
 
 \, 
 
 Optimal learning curves exist on totally geodesic submanifolds of $\sS$ satisfying the WDVV equation (being a Frobenius manifold).

\,

The final statement of this section is now presented. 
\begin{thm}\label{P:Hex}
Let $\cD\subset N$ be a domain of a totally geodesic submanifold $N$ of $\sS$. Suppose  that it satisfies the Frobenius manifold requirements. 
Then $\cD$ is equipped with a hexagonal web. The learning on $\cD$ can thus be described using a local hexagonal lattice and is optimal. 
\end{thm}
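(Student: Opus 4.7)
The plan is to assemble the statement from three already-established ingredients: the Monge--Ampère structure carried by any (pre-)Frobenius manifold, the hexagonality of $W(2,3,r)$-webs on such a manifold, and their parallelizability. First, I would observe that by hypothesis $N$ satisfies the WDVV equations~\ref{E:wdvv} and is thus a Frobenius manifold; invoking the result of \cite{C24} recalled above, it coincides locally with a Monge--Ampère manifold, so in particular $N$ is (locally) dually flat and carries a pencil of flat torsionless connections. Since $\cD$ is a domain in $N$, it inherits all of this local structure; by dual flatness it admits the two flat coordinate systems $(\eta,\eta^*)$ from Subsection~\ref{S:Duallyflat}.

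Second, using the foliations by the coordinate hypersurfaces of a flat coordinate system I would build a local $(n+1)$-web $W(n+1,n,r)$ on $\cD$, exactly in the Pfaffian form $\omega_\zeta^j=0$ introduced earlier. By Proposition~\ref{T:para} this web, and all of its $(k+1)$-subwebs, are parallelizable, because the Chern connection of a Frobenius manifold is flat and torsionless. Applying the preceding proposition for Frobenius manifolds to each $3$-subweb $W(2,3,r)$, every such $3$-subweb is hexagonal; by the definition of hexagonality for $W(n+1,n,r)$ recalled from \cite{Go88}, this forces the full web to be hexagonal.

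Third, from the combination parallelizable plus hexagonal I would deduce the honeycomb lattice: in the flat affine coordinates the three distinguished foliations can be chosen as families of parallel straight lines whose superposition outlines a family of closed hexagons, and a linear reparametrisation inside the affine chart then turns these hexagons into regular ones, giving the local honeycomb lattice announced in the statement. The optimality assertion then reduces to Corollary~\ref{C:optimal} combined with Definition~\ref{D:veryoptimal}: since $\cD$ sits in a Frobenius (hence Monge--Ampère and locally flat) manifold, any learning curve on $\cD$ lies on a flat domain where the Monge--Ampère equation holds locally, so it qualifies as very optimal, and the trajectories can be read off in the regular hexagonal coordinates.

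I expect the main technical obstacle to be the third step, where one must argue carefully that the parallelizable hexagonal web can actually be realised as a genuine regular honeycomb tiling rather than merely a hexagonal web in the weaker combinatorial sense. This requires an explicit choice of affine rescaling in the flat coordinates, together with a compatibility check between the Frobenius multiplication $\circ$ and the chosen directions of the three foliations so that the closed hexagons produced by the Thomsen/Ceva closure align into a periodic lattice.
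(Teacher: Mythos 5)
Your argument is correct and takes essentially the same route as the paper's proof: the Frobenius condition forces the Chern connection to be flat and torsionless, the curvature-vanishing criterion of \cite{Ak}/\cite{Go88} (applied to the $3$-subwebs) then yields hexagonality of the web, and the optimality follows from the Monge--Amp\`ere structure. You in fact supply several details the paper leaves implicit --- the construction of the web from the flat coordinate foliations, the explicit reduction to $W(2,3,r)$-subwebs via Proposition~\ref{T:para}, and the appeal to Corollary~\ref{C:optimal} for the optimality claim --- and the difficulty you flag about upgrading the hexagonal web to a genuinely \emph{regular} honeycomb is likewise not addressed in the paper's own proof.
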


\begin{proof}
This follows partly from  \cite[Sec.4]{CMM2023}. Having the Frobenius manifold condition implies that we have an integrable system. In particular, it is also endowed with the fact that the Chern connection is flat and that the torsion vanishes. 

A web $W(n + l,n,r)$ is said to be hexagonal if all $n\choose 3$ its 3-
subwebs are hexagonal.

By \cite{Ak} the web is hexagonal if and only if its curvature vanishes. Therefore, we have the conclusion. 

Therefore, locally, on such a domain  $\cD$ the learning process can be expressed using the hexagonal structures.

\end{proof}
\subsubsection{Conclusion}
The previous result brings  many new interesting possibilities to explore. 
One immediate application  could be to use the hexagonal webs as a local hexagonal lattice providing us with some local hexagonal coordinates. There are many properties one may take advantage of. For instance hexagonal webs are related to having group webs. Due to such properties, local considerations on the learning can be simplified using the symmetries: coming from the group webs on one side and coming from the dihedral symmetries of the hexagon. This will be the subject of a new paper.

\,




\bibliographystyle{acm}
\bibliography{sample}

\end{document}